\newcommand{\R}{\mathbb R}
\newcommand{\Z}{\mathbb Z}
\newcommand{\N}{\mathbb N}
\DeclareMathOperator{\proj}{proj}
\DeclareMathOperator{\supp}{supp}
\DeclareMathOperator{\opt}{Opt}
\DeclareMathOperator{\val}{Val}
\DeclareMathOperator{\aug}{Aug}
\DeclareMathOperator{\ext}{ext}
\DeclareMathOperator{\dist}{dist}
\DeclareMathOperator{\suc}{s.t.}
\DeclareMathOperator{\cov}{cov}
\newcommand{\D}{\mathcal D}
\renewcommand{\L}{\mathcal L}
\renewcommand{\S}{\mathcal S}
\newcommand{\X}{\mathcal X}
\newcommand{\norm}[1]{\left\lVert#1\right\rVert}
\newcommand{\pare}[1]{\left(#1\right)}
\newcommand{\abs}[1]{\left\lvert#1\right\rvert}
\newcommand{\es}{\sigma}
\newcommand{\notemi}[1]{}
\newenvironment{cpf}
{\begin{trivlist} \item[] {\hspace{.5cm} \em Proof of claim. }}
{$\hfill\diamond$ \end{trivlist} \smallskip}
\newcounter{step}
\newenvironment{step}[1][]
{\refstepcounter{step} \smallskip \bfseries Step~\thestep. #1}
{. }
\newtheorem{theorem}{Theorem}
\newtheorem{proposition}{Proposition}
\newtheorem{lemma}{Lemma}
\newtheorem{definition}{Definition}
\newenvironment{cpf}
{\begin{trivlist} \item[] {\em Proof of claim. }}
{$\hfill\diamond$ \end{trivlist} \smallskip}
\newcounter{step}
\newtheorem{claim}{Claim}
\newcommand{\kywrds}{subset selection, 
linear least squares, polynomial-time algorithm, sparsity}
\begin{document}

\title{Subset Selection in Sparse Matrices\thanks{\textbf{Funding: }A.~Del~Pia is partially funded by  ONR grant N00014-19-1-2322. The work of R.~Weismantel is supported by the Einstein Foundation Berlin. Any opinions, findings, and conclusions or recommendations expressed in this material are those of the authors and do not necessarily reflect the views of the Office of Naval Research.}}

\author{Alberto Del Pia\thanks{Department of Industrial and Systems Engineering \& Wisconsin Institute for Discovery,
University of Wisconsin-Madison. 
E-mail: {\tt delpia@wisc.edu}.}
\and
Santanu S. Dey\thanks{School of Industrial and Systems Engineering, Georgia Institute of Technology.
E-mail: {\tt santanu.dey@isye.gatech.edu}.}
\and
Robert Weismantel\thanks{Department of Mathematics, ETH Z\"urich.
E-mail: {\tt robert.weismantel@ifor.math.ethz.ch}.}
}

\date{\today}

\maketitle







\begin{abstract}
In Subset Selection we search for the best linear predictor that involves a small subset of variables. 
From a computational complexity viewpoint, Subset Selection is NP-hard and few classes are known to be solvable in polynomial time.
Using mainly tools from Discrete Geometry, we show that some sparsity conditions on the original data matrix allow us to solve the problem in polynomial time.
\end{abstract}

\ifthenelse {\boolean{SIOPT}}
{
\begin{keywords}
\kywrds
\end{keywords}

\begin{AMS}
90C26
\end{AMS}
}{
\emph{Key words:} \kywrds
}

\section{Introduction}

With the current developments in technology,  data science  and machine learning it is apparent that in the near future more and more variants of Mixed Integer Optimization models will emerge whose dimension is extremely large, if not huge.  In view of the fact that mixed integer optimization is NP-hard, the aim is to identify parameters in the input data that lead to polynomial-time algorithms. 


One natural parameter  in this context is the sparsity of the input data. More precisely,  the goal is to identify
 sparsity patterns in the  data and exploit them to design algorithms that depend more on the complexity of the sparsity patterns and less on the ambient dimension of the
	problem. The hope is that in such a way, high dimensional problems, which are
	typically intractable simply due to their dimension, can be made algorithmically
	solvable by decomposing them into smaller ones. 
For example, consider a Linear Integer Optimization
	problem where one aims at  optimizing a linear function over the integer points in a polyhedral region $\{x \in \Z^n \mid Ax \leq b\}$. Assume that the   underlying constraint matrix $A$ is the so-called $1$-sum of matrices $A^1,\ldots,A^h$, i.e., 	
\begin{equation*}
A= 
\left(\begin{array}{cccc}
A^1 & & \multicolumn{2}{c}{\multirow{2}{*}{0}}\\
& A^2 & & \\
\multicolumn{2}{c}{\multirow{2}{*}{0}} & \ddots & \\
& & & A^h \end{array}\right).
\end{equation*}
A high dimensional Linear Integer Optimization problem  of $1$-sum structure can be easily solved by decomposing it  into optimization problems associated with the smaller units $A^1,\ldots,A^h$.  In particular, if for every $i \in \{1,\ldots,h\}$ the number of columns of $A^i$ is a constant, then Lenstra's algorithm \cite{Lenstra1983}  applied $h$ times  shows us how to solve the original high-dimensional optimization problem in polynomial time.   
We remark that in this paper we use the standard notion of polynomial-time algorithms in Discrete Optimization, and we refer the reader to 
the book~\cite{SchBookIP} for a thorough introduction.
A more involved sparsity structure in a matrix arises when there is an additional matrix $C=(c_1|\ldots | c_k)$ whose number of columns $k$ is also assumed to be constant, but the matrix $C$ itself  is part of a larger matrix $M$ such that it may interact with all the matrices $A^i$. More formally,  the matrix $M$ is of the form
\begin{align}
\label{eq: matrix M}
M = (A|C) = 
\left(
\begin{array}{cccc|ccc}
A^1&&&&|&&| \\
&\ddots &&& c_1 & \cdots & c_k \\
&&A^h&&|&&|\\
\end{array}
\right).
\end{align}	
The complexity for Linear Integer Optimization associated with a matrix of sparsity pattern~\eqref{eq: matrix M}  in which the number $n_i$ of columns of each $A^i$ is a constant and $k$ is a constant is not known, unless one also requires that $\norm{M}_\infty$  is  a constant. 
In fact, extensive research in the field in the past decade on N-fold matrices~\cite{DLHOW2008, HOR2013}, $4$-block decomposable matrices~\cite{HKW2010} and tree-fold matrices~\cite{Chenetal} finally led to the most general result about which matrices of the form \eqref{eq: matrix M} can be handled efficiently: Linear Integer Optimization can be solved in polynomial time if the so-called tree width of $M$ or $M^T$ is constant and
the value $\norm{M}_\infty$ is a constant, see \cite{Kourtetal}.  
What can one say about other variants of Mixed Integer Optimization associated with matrices of sparsity pattern \eqref{eq: matrix M}  in which the number of columns of each $A^i$ is a constant and $k$ is a constant, but the parameter $h$ is part of the input?

One important class of Mixed Integer Optimization problems arising in the context of statistics, data science and machine learning, is called Subset Selection and is formally introduced in Equation \eqref{pr: main} below.   In  the underlying model  all variables may attain real values. Among all variables,
however,  only a certain, a-priori given number of them are allowed to attain
non-zero values. This latter combinatorial condition is the discrete flavor of the underlying model. The objective is a convex quadratic function in variables $x \in \R^d$ and the scalar variable $\mu$.
The remaining characters stand for rational data in the problem instance: $M$ is an $m \times d$ matrix, $b$ and $c$ are $m$-vectors, and $\es$ is a natural number.
Finally, the support of a vector $x$ is defined to be  $\supp(x) = \{i \mid x_i \neq 0\}$ and  $\norm{\cdot}$ denotes the Euclidean norm.
Subset Selection is then defined as the optimization problem
\begin{align}
\label{pr: main}
\begin{split}
\min \quad & \norm{Mx + c \mu -b} \\
\text{s.t.} \quad & x \in \R^d, \ \mu \in \R \\
& |\supp(x)| \le \es.
\end{split}
\end{align}

Problem~\eqref{pr: main} is a standard model in statistics, see the textbooks \cite{miller2002subset} and \cite{HasTibFri}. In this context, it is usually assumed that $c$ is the vector of all ones.
It is also often assumed that the columns of $M$ and $b$ are mean-centered (i.e., the sum of entries in the columns of $M$ and in $b$ is zero), in which case it can be shown that the optimal value of $\mu$ is zero.
From an optimization point of view there is no complication if one does not assume mean-centering. Therefore, we retain the $\mu$ variable and the vector $c$.

From a computational complexity point of view, Subset Selection  is -- such as Integer Linear Optimization -- NP-hard in general \cite{Wel82}.
This complexity viewpoint then leads us naturally to the same question introduced in the context of Linear Integer Optimization: can we identify sparsity patterns in the input data for Subset Selection that allow us to design algorithms that depend more on the complexity of the sparsity patterns and less on the ambient dimension of the problem? 
More precisely, given a  matrix of sparsity pattern \eqref{eq: matrix M} where each of the blocks $A^1,\ldots,A^h$ and $C$ have a constant number of columns, can Subset Selection be solved in polynomial time?
This paper answers the question in the affirmative. While some other sparsity patterns have been explored in the context of compressive sensing~\cite{5462827}, to the best of our knowledge the sparsity pattern in the form of \eqref{eq: matrix M} is unexplored.
Formally, the following is the main result of this paper: 
\begin{theorem}
\label{th: main}
There is an algorithm that solves problem~\eqref{pr: main} in polynomial time,
provided that 
$M$ is of the form \eqref{eq: matrix M} and
each of the blocks $A^1,\ldots,A^h$ and $C$ have a constant number of columns.
\end{theorem}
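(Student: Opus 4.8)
The plan is to exploit the block structure by isolating the few variables that couple the blocks, solving the resulting decoupled problem by dynamic programming, and finally optimizing over the coupling variables with tools from discrete geometry. First I would separate variables: write $x=(x^1,\dots,x^h,x^C)$ according to the columns of $A^1,\dots,A^h$ and $C$, and partition the rows of $M$ into the blocks $R_1,\dots,R_h$ induced by the block-diagonal part $A$. Restricting to the rows of each $R_i$, the objective splits as $\norm{Mx+c\mu-b}^2=\sum_{i=1}^h\norm{A^ix^i+C^{[i]}x^C+c^{[i]}\mu-b^{[i]}}^2$, where $C^{[i]},c^{[i]},b^{[i]}$ denote the restrictions to $R_i$. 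The only quantities shared among the blocks are $x^C\in\R^k$ and the scalar $\mu$; collect them into a coupling vector $y=(x^C,\mu)$, which lives in the space $\R^{k+1}$ of constant dimension. Since using a column of $C$ consumes one unit of the budget $\es$, I would first guess the support $S_C\subseteq\{1,\dots,k\}$ of $x^C$ (only $2^k$, i.e.\ constantly many, choices), set the remaining budget to $\es'=\es-|S_C|$, and thereafter treat $y$ as ranging over a fixed subspace of $\R^{k+1}$.

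For a fixed value of $y$ the problem decouples completely. In block $i$ the target $t_i(y)=b^{[i]}-C^{[i]}x^C-c^{[i]}\mu$ is determined, and for every subset $S\subseteq\{1,\dots,n_i\}$ of its columns the best fit is the least-squares residual $f_{i,S}(y)=\norm{(I-P_{i,S})t_i(y)}^2$, where $P_{i,S}$ is the orthogonal projection onto the span of the columns of $A^i$ indexed by $S$. As $n_i$ is constant there are only $2^{n_i}$ such subsets, so $\phi_i(y,s)=\min_{|S|\le s}f_{i,S}(y)$ can be computed for every size $s$ by brute force. The budget couples the blocks only through $\sum_i|S_i|\le\es'$, and the optimal allocation is then found by a multiple-choice knapsack dynamic program over the blocks, $D_i(s)=\min_{0\le s'\le n_i}\bigl(D_{i-1}(s-s')+\phi_i(y,s')\bigr)$, whose value $V(y)=D_h(\es')$ is the optimum of \eqref{pr: main} for that $y$. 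This routine evaluates $V(y)$, and recovers the optimal supports, in time polynomial in $h$ and $\es$.

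It remains to minimize $V$ over $y$. Within each cell of the arrangement of the $O(h)$ within-block quadric surfaces $\{y:f_{i,S}(y)=f_{i,S'}(y)\}$ — an arrangement that, in the constant dimension $k+1$, has only $O(h^{k+1})$ cells and faces — the best support of each size in each block is fixed, so $\phi_i(y,s)$ coincides with a single convex quadratic $q_{i,s}(y)$ and $V$ restricted to that cell is piecewise convex-quadratic. On such a cell the task reduces to $\min_{y}\min_{\mathbf s:\,\sum_i s_i\le\es'}\sum_i q_{i,s_i}(y)$, i.e.\ to choosing one quadratic per block subject to the budget and then minimizing their sum over $y$, the unconstrained minimizer of any fixed choice being the solution of a linear system in the $k+1$ unknowns $y$.

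The main obstacle is exactly this per-cell problem. For a fixed $y$ it is the knapsack dynamic program above, but optimizing jointly over the continuous $y\in\R^{k+1}$ and the discrete allocation $\mathbf s$ is delicate: the cardinality constraint couples the blocks, so a priori exponentially many allocations could each be optimal on a different sub-region, and the inner value function can switch allocation across surfaces of the form $\{\sum_i(q_{i,s_i}-q_{i,s_i'})=0\}$, of which there are a priori exponentially many. The crux is to show that only polynomially many coupling vectors $y$ can arise as candidate optima. I would exploit that any locally active allocation's minimizer solves $\nabla\bigl(\sum_i q_{i,s_i}\bigr)(y)=0$, a linear system whose matrix and right-hand side are sums over the $h$ blocks of per-block contributions drawn from constant-size menus and lying in a space of constant dimension. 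Bounding the number of realizable such systems, and hence of candidate minimizers, by a polynomial in $h$ — via arrangement bounds for low-degree algebraic surfaces in fixed dimension — and then organizing the enumeration so that each candidate is produced and tested in polynomial time, is where the real work lies.
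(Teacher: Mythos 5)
Your setup matches the paper's almost exactly: the $2^k$ guesses for the support of $x^C$, the coupling vector of constant dimension, the per-block least-squares residuals $f_{i,S}(y)$, the knapsack-style dynamic program for fixed $y$ (the paper states this same recursion and explicitly discards it for the covering step), and the first arrangement making each $\phi_i(y,s)$ a single quadratic per cell. But the proof has a genuine gap at exactly the point you flag as ``where the real work lies'': you never establish that only polynomially many allocation-switching surfaces, or candidate stationarity systems, can arise. Your suggested fix does not work as stated. The linear systems $\nabla\bigl(\sum_i q_{i,s_i}\bigr)(y)=0$ are indexed by allocations $\mathbf s$, and each block contributes one of constantly many per-block quadratics, so the number of distinct sums is a priori $c^h$ --- exponential in $h$. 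Arrangement bounds for low-degree surfaces in fixed dimension only give a polynomial cell count once the \emph{family of surfaces} is known to be polynomial in size; they cannot by themselves prune an exponential family of candidate systems. (There is also a secondary flaw: the per-cell optimum may lie on a cell boundary, so stationarity of a single allocation's quadratic sum does not capture all candidates.)

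The missing idea is the paper's proximity theorem (Lemma~\ref{lem: increase}): if $j^s$ is an optimal allocation for budget $s$, there is an optimal allocation for budget $s+1$ that is $q$-close to $j^s$ with $q \le \bar\theta := (\theta-1)\theta(\theta+1)/2$, where $\theta = \max_i n_i$; in particular $\norm{j^{s+1}-j^s}_1 \le 2\bar\theta+1$, so the two allocations differ in a constant number of blocks. Consequently the incremental algorithm that grows the budget from $0$ to $\es'$ only ever compares \emph{difference values} $d(j^s,j^{s+1}) = \sum_{i \in I^+\cup I^-}\bigl(\val(i;j^{s+1}_i)-\val(i;j^s_i)\bigr)$ involving at most $2\bar\theta+1$ blocks each, and the total family $\D$ of such values has size $O\bigl((\bar\theta+h)^{\bar\theta+1}\theta^{2\bar\theta+1}\bigr)$ --- polynomial in $h$ since $\theta$ is constant, even though the number of feasible allocations is of order $\theta^h$. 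Pairwise comparisons of these differences yield only $O(h^{2(\bar\theta+1)})$ quadratic surfaces in $y$ (the paper linearizes them as hyperplanes in a lifted space of dimension $O(k^2)$), and within each cell of the resulting arrangement the entire run of the incremental algorithm, hence the optimal support, is constant. That is precisely the polynomial bound on switching surfaces your argument needs and does not supply; without the proximity lemma (or an equivalent structural statement about how optimal allocations move as the budget increases), your enumeration cannot be organized into polynomially many candidates.
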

In particular, our algorithm solves $O(2^k h^{(k+1)^2(2 \bar \theta +3)})$ linear least squares problems, where $k$ is the number of columns of $C$, $n_i$ is the number of columns of block $A^i$, $\theta := \max\{n_i : i=1,\dots,h\}$, and $\bar \theta := (\theta - 1) \theta (\theta + 1)/2$.
We remark that the presence of $k$ and of $n_1,\dots,n_h$ in the exponent is expected, since the problem is NP-hard even if only one among $k,n_1,\dots,n_h$ is not a constant but a parameter which is part of the input.
This is because the block with a non-constant number of columns can be used to solve a general Subset Selection problem.


The generality of the techniques used in our proofs yields, in fact, a result that is stronger than Theorem~\ref{th: main}.
Indeed, the 2-norm in the objective plays no special role in our derivation, and our proofs can be tweaked to deal with a more general $p$-norm, for any fixed $p \ge 1$.



This paper is organized as follows.
After discussing related literature in Section~\ref{sec: literature}, in Section~\ref{sec: reduction} we present an overview of our algorithm and we introduce a reduction of problem~\eqref{pr: main} to separable form.
In Section~\ref{sec: diag} we prove Theorem~\ref{th: main} under the additional assumption that each $A^i$ block is a just a $1 \times 1$-matrix, meaning that the matrix $A$ is diagonal.
Then, in Section~\ref{sec: block} we give a complete proof of Theorem~\ref{th: main}, and so we study the more general case where each $A^i$ block can be any matrix with a constant number of columns.
The polynomial-time algorithm presented in Section~\ref{sec: block} clearly implies the polynomial solvability of the class of problems discussed in Section \ref{sec: diag}.
We present our algorithms in this order, since the proof for the diagonal case features many key ideas that will also be used in the block diagonal case, but with significantly more technicalities.

\subsection{Relevance of the topic and related work}
\label{sec: literature}

In Discrete Optimization, few examples of sparsity patterns are known that allow us to solve the problem in a running time that grows moderately with the ambient dimension.
The earliest occurrence of sparsity patterns studied in Discrete Optimization is sixty years old. It occurs in the famous maximum $s-t$ flow over time problem of Ford and Fulkerson~\cite{FF1958}. See \cite{H1995} and~\cite{MS2009} for surveys  about flow over time models.

Problem~\eqref{pr: main} and variations of it have been studied by several communities with quite different foci on theoretical and more computational aspects. Because of the breadth of the topics, a detailed description of the state-of-the-art for its many faces does not appear to be feasible. We list below several results with links to the computational complexity viewpoint on the subject, with comparison to our result, and point the reader to references. 


\paragraph{Algorithms assuming properties of eigenvalues of $M^{\top}M$}
Gao and Li~\cite{GaoLi13} present a polynomial time algorithm when the $d-l$ largest eigenvalues of $M^\top M$ are identical, provided that $l$ is a fixed number.  
Clearly, our result is significantly different since it is possible in our setting that the top eigenvalue of $M^TM$ is unique. 
Indeed, if $M$ is a diagonal matrix with distinct entires, we have a unique maximum eigenvalue of $M^TM$. 
However, in this case one could argue that it is possible to scale variables and arrive at a resulting matrix where all the eigenvalues of $M^TM$ are equal. 
On the other hand, consider the matrix:
\begin{align*}
M = 
\left(
\begin{array}{c|c}
I_n & \frac{1}{\sqrt{n}}\textbf{1}
\end{array}
\right),
\end{align*}
where $I_n$ is the $n$-times-$n$ identity matrix and $\textbf{1}$ is the column vector with each component equal to $1$. Now each column has an equal two-norm. It is easily verified that the largest eigenvalue of $M^{\top}M$ is unique (in fact equal to $2$ with the next eigenvalue being equal to $1$), thus showing that our results are different from those in~\cite{GaoLi13}.

\paragraph{Algorithms assuming sparsity properties of the covariance matrix} 
The covariance matrix is computed as $(M - \mathbf{1}a)^{\top}(M - \mathbf{1}a)$ where $\mathbf{1}$ is a column vector of $1$s and $a$ is a row vector  with $a_i$ being the average of the $i^{th}$ column of $M$. Associated with the covariance matrix is the so-called covariance graph, whose nodes correspond to columns of $M$, i.e.  the variables,  and an edge between two nodes $i$ and $j$ reflects the fact that the corresponding variables have non-zero covariance, i.e. the entry $(i,j)$ of $(M - \mathbf{1}a)^{\top}(M - \mathbf{1}a)$ is non-zero.  

Das and Kempe~\cite{DasKem08} give exact algorithms when the covariance graph is  sparse: either a tree, or an independent set with a fixed number of variables not participating in the independent set. Also, there exists a fully polynomial time approximation scheme for solving Subset Selection when the covariance matrix has constant bandwidth \cite{DasKem08}.  These results have been obtained by exploiting matrix perturbation techniques.	It is easy to see that in our setting the covariance matrix can be very dense. For example, consider again the matrix:
\begin{align*}
M = 
\left(
\begin{array}{c|c}
I_n & \frac{1}{\sqrt{n}}\textbf{1}
\end{array}
\right).
\end{align*}
If we examine the MLE estimator of the covariance matrix (see~\cite{DasKem08}) of $M$ (where rows are samples and columns are variables), then it easy to verify that $\cov(i,j) = -\frac{1}{n^2} \neq 0$ for $i, j \leq n$ and $i \neq j$. 


\paragraph{Algorithms assuming restricted isometry property of $M$} A series of papers~\cite{Don06,CanRomTao06,candes2005decoding, candes2007dantzig,candes2006stable} study this problem in the signal processing community under the name ``sparse approximation" with applications to the sparse signal recovery from noisy observations.  These papers prove various versions (of deterministic or randomized `noisy' setting)  of the following meta-theorem: ``Suppose there exists a sparse (noisy) solution $\hat{x}$ to $Mx = b$, that is $M\hat{x} = b + \textup{`noise'}$ and $\hat{x}$ is $\sigma$-sparse. If the matrix $M$ satisfies the ``restricted isometry condition", then there is a simple convex program whose optimal solution exactly recovers the solution $\hat{x}$." These results serve as a theoretical foundation of why  lasso~\cite{tibshirani1996regression} often works well experimentally. 

To derive the convex program, examine a dual version of problem~\eqref{pr: main}: given a  scalar $\epsilon >0$, a target vector $b$  and a matrix $M$, the task is to solve 
\begin{equation}\label{dual0}	\min \{ \abs{\supp(x)}  
\ : \
x \in \R^d,\;   \norm{Mx -b} \leq \epsilon\}.
\end{equation}
 or a simplified version of it  where the support-objective is replaced  by an $l_1$-norm objective:
\begin{equation}\label{dual1}	
\min \{ \norm{x}_1  
\ : \
x \in \R^d,\;   \norm{Mx -b} \leq \epsilon\}.
\end{equation}
The latter problem can be attacked using convex optimization tools.


It is also easy to show that the $M$ matrices we can provide a polynomial-time algorithm for, do not always satisfy the required restricted isometry condition. A key parameter is the $S$-restricted isometry constant~\cite{candes2007dantzig} of $M$, denoted as $\delta_S$, and defined as a scalar satisfying:
\begin{align*}
(1 - \delta_S)\norm{x}^2 \leq \norm{M_T x}^2 \leq (1 + \delta_S) \norm{x}^2, \ \forall x \in \mathbb{R}^{|T|},
\end{align*}
where $T$ is a subset of the index set of columns of $M$, $|T| \leq S$ and $M_T$ is the matrix with columns corresponding to indices in $T$.  One restricted isometry condition from~\cite{candes2005decoding}: $\delta_S + \theta_{S,S} + \theta_{S,2S} < 1$, guarantees the exact recovery of optimal solution for $\sigma \leq S$ by solving a convex program,  (where $\theta_{S,S} , \theta_{S,2S}$ are other non-negative parameters). Consider the matrix of the form 
\begin{align*}
M = 
\left(
\begin{array}{c|c}
D
&
\begin{matrix}
G \\ 
\textbf{0}_{n -3, 3}
\end{matrix}
\end{array}
\right),
\end{align*}
where $D$ is a $n$-times-$n$ diagonal matrix, $\textbf{0}_{n -3, 3}$ is the all-zero matrix with $n -3$ rows and $3$ columns, and $G$ is the matrix:
\begin{align*}
G = 
\left(
\begin{array}{ccc}
\frac{1}{\sqrt{6}}& \frac{1}{\sqrt{6}} & \frac{-2}{\sqrt{6}} \\ 
\frac{1}{\sqrt{6}}& \frac{-2}{\sqrt{6}}& \frac{1}{\sqrt{6}}   \\ 
\frac{-2}{\sqrt{6}}& \frac{1}{\sqrt{6}}& \frac{1}{\sqrt{6}}
\end{array}
\right).
\end{align*}
Then it is easy to see that $\delta_{3} \geq 1$ (and thus $\delta_3 + \theta_{3, 3} + \theta_{3, 6} \geq 1$), since $Gv =0$ for $v$ being the 3-dimensional all-ones vector.  Thus, for such a $M$ matrix the exact recovery results do not hold for $\sigma \geq 3$. In general, since in our result the extra $k$ columns (other than the block diagonal matrix) that are allowed in the $M$ matrix  can be arbitrary, it is very easy to destroy any potential restricted isometry condition.


\paragraph{Other computational approaches} Due to the vast applicability of Problem~\eqref{pr: main},  many computational approaches are available to solve experimentally instances of \eqref{pr: main} or variations of it. Among them are greedy algorithms (e.g., forward- and backward-stepwise selection~\cite{miller2002subset,efroymson1960multiple,couvreur2000optimality,das2011submodular,elenberg2018restricted} with provable approximation guarantees, forward-stagewise regression~\cite{efron2004least,HasTibFri}), branch and bound method~\cite{beale1967discarding,hocking1967selection} (e.g., the leaps and bounds procedure~\cite{furnival1974regressions}), and convex optimization (e.g., ridge regression~\cite{hoerl1970ridge}, the lasso~\cite{tibshirani1996regression,oymak2013squared}).
See also \cite{HasTibFri,david2017high} for further references on this topic.

\subsection{Overview of the algorithm and reduction to separable form}
\label{sec: reduction}

We briefly explain our overall strategy to prove Theorem~\ref{th: main}.
We first show that it suffices to consider problems of the form 
\begin{align}
\label{pr: main reduced}
\begin{split}
\min \quad & \norm{Ax - \pare{b - \sum_{\ell = 1}^{k} c_\ell \lambda_\ell}}^2 \\
\suc \quad & x \in \R^n, \ \lambda \in \R^{k} \\
& |\supp(x)| \le \es,
\end{split}
\end{align}
where $A \in \R^{m \times n}$ is block diagonal with blocks $A^i \in \R^{m_i \times n_i}$, for $i=1,\dots,h$, 
and $b,c_1,\dots,c_k \in \R^m$, $\es \in \N$.
In particular, $n = \sum_{i=1}^h n_i$.
More specifically, to prove Theorem~\ref{th: main}, it suffices to give a polynomial-time algorithm for problem~\eqref{pr: main reduced} under the assumption that $k,n_1,\dots,n_h$ are fixed numbers.


Next, we design an algorithm to solve the problem obtained from \eqref{pr: main reduced} by fixing the vector $\lambda \in \R^k$, namely
\begin{align*}
\opt(\es)_{|\lambda} & := \min \left\{\norm{A x - \pare{b - \sum_{\ell = 1}^k c_\ell \lambda_\ell}}^2 \ : \  x \in \R^n, \ |\supp(x)| \le \es \right\}.
\end{align*}
This algorithm has the additional property that it only needs to evaluate a polynomial number of inequalities in order to identify an optimal support of the problem.

Our next task is to partition all possible vectors $\lambda \in \R^k$ in a number of regions such that in each region the algorithm yields the same optimal support.
Normally, such a partition can be comprised of exponentially many regions, even for a polynomial-time algorithm.
However, the special property of our algorithm for $\opt(\es)_{|\lambda}$ allows us to obtain a partition formed only by a polynomial number of regions.

We then construct the set $\X$ of supports that contains, for each region, the corresponding optimal support.
In particular, the set $\X$ has the property that there is at least one optimal solution of \eqref{pr: main reduced} whose support is contained in some $\chi \in \X$.
Finally, for each support $\chi \in \X$, we solve the linear least squares problem obtained from problem~\eqref{pr: main reduced} by dropping the support constraint and by setting to zero all variables with indices not in $\chi$.
All these linear least squares problems can be solved in polynomial time (see, e.g.,~\cite{ShaBenMLbook}), and the best solution obtained is optimal to \eqref{pr: main reduced}.

\smallskip

The approach that we present relies on two key results.
The first is an analysis of the proximity of optimal solutions with respect to two consecutive ``support-conditions'' $\abs{\supp(x)} \le s$ and $\abs{\supp(x)} \le s+1$. 
This result is fairly technical and is presented in Section~\ref{sec: block simple}, as it is only needed in the general block diagonal case in order to obtain the algorithm for $\opt(\es)_{|\lambda}$.
The second result is a tool from Discrete Geometry known as the \emph{hyperplane arrangement theorem}.
We now briefly describe this theorem, which is needed to upper bound the number of regions in the partition of the $\lambda$-space that we construct.
A set $H$ of $n$ hyperplanes in a $k$-dimensional Euclidean space determine a partition of the space called the \emph{arrangement of $H$,} or the \emph{cell complex induced by $H$.}
The hyperplane arrangement theorem states that this arrangement consists of $O(n^k)$ polyhedra and can be constructed with $O(n^k)$ operations.
For more details, we refer the reader to the paper \cite{EdeOroSei86}, and in particular to Theorem~3.3 therein.

\bigskip


In the remainder of this section we show the reduction to form \eqref{pr: main reduced}.
Problem~\eqref{pr: main reduced} has a key benefit over the original Subset Selection problem~\eqref{pr: main} with matrix $M$ of the form \eqref{eq: matrix M}:
the support constraint is applied only to the variables 
associated with the columns of $A$. 
A key consequence is that, once we fix the variables that are not subject to support constraints, then the objective function is decomposable in a sum of quadratic functions, and each one depends only on the components of $x$ corresponding to a single block of $A$.
This property will be crucial in the design of our algorithm.

\begin{lemma}
\label{lem: reduction}
To prove Theorem~\ref{th: main}, it suffices to give a polynomial-time algorithm for problem~\eqref{pr: main reduced}, provided that $k,n_1,\dots,n_h$ are fixed numbers.
\end{lemma}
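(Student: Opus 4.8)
The plan is to reduce \eqref{pr: main} to \eqref{pr: main reduced} in three moves, only the last of which needs real work. First, since the objective of \eqref{pr: main} is a Euclidean norm and $t\mapsto t^2$ is strictly increasing on $[0,\infty)$, replacing $\norm{Mx+c\mu-b}$ by its square leaves the set of minimizers unchanged, so I may square the objective for free. Second, I would split $x\in\R^d$ along the block structure $M=(A\mid C)$ of \eqref{eq: matrix M}, writing $x=(x_A,x_C)$ with $x_A\in\R^n$ indexing the columns of $A$ and $x_C\in\R^k$ indexing the columns $c_1,\dots,c_k$ of $C$, so that $Mx=Ax_A+\sum_{\ell=1}^k c_\ell (x_C)_\ell$. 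The scalar $\mu$ and the vector $c$ are never touched by the support constraint, so I would treat $c$ as one extra free column alongside $c_1,\dots,c_k$; then the squared objective becomes $\norm{Ax_A+\sum_{\ell=1}^k c_\ell(x_C)_\ell+c\mu-b}^2$, which already matches the shape of \eqref{pr: main reduced}.

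The only genuine difficulty is the support constraint: in \eqref{pr: main} the budget $\abs{\supp(x)}\le\es$ is charged to all of $x$, hence to both $x_A$ and $x_C$, whereas in \eqref{pr: main reduced} it is charged to $x_A$ alone and the $\lambda$-variables are free. I would bridge this gap by guessing the support of $x_C$. For every $T\subseteq\{1,\dots,k\}$ with $\abs{T}\le\es$, I would form the instance of \eqref{pr: main reduced} that promotes to free variables only the columns $\{c_\ell:\ell\in T\}$ together with $c$, and that charges the remaining budget to $x_A$, namely
\begin{align*}
\min\quad & \norm{Ax_A - \pare{b - \sum_{\ell\in T} c_\ell\lambda_\ell - c\mu}}^2 \\
\suc\quad & x_A\in\R^n,\ (\lambda_\ell)_{\ell\in T}\in\R^{\abs{T}},\ \mu\in\R \\
& \abs{\supp(x_A)}\le \es-\abs{T}.
\end{align*}
This instance has $\abs{T}+1\le k+1$ free columns and its matrix is the same block-diagonal $A$ with blocks of $n_1,\dots,n_h$ columns; hence it is of the form \eqref{pr: main reduced} with all of $k,n_1,\dots,n_h$ fixed, and the assumed algorithm solves it in polynomial time.

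It remains to verify that the best of these instance optima equals the optimum of \eqref{pr: main}, and that producing it is cheap. Any feasible point of the instance attached to $T$ yields a feasible point of \eqref{pr: main} with equal objective value: extending $(\lambda_\ell)_{\ell\in T}$ by zeros to a vector $x_C$ supported on $T$ gives $\abs{\supp(x_A)}+\abs{\supp(x_C)}\le(\es-\abs{T})+\abs{T}=\es$, so each instance optimum is at least $\opt$ of \eqref{pr: main}. Conversely, an optimal $(x_A^*,x_C^*,\mu^*)$ of \eqref{pr: main} is feasible for the instance attached to $T^*:=\supp(x_C^*)$, since then $\abs{\supp(x_A^*)}\le\es-\abs{T^*}$; so the minimum over $T$ is at most $\opt$ of \eqref{pr: main}, and the two bounds coincide. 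Finally, there are at most $2^k$ subsets $T$, a constant, and each instance is handled in polynomial time by hypothesis, so the whole reduction runs in polynomial time. I expect this guessing step to be the crux: the enumeration must range over the \emph{identity} of the active $C$-columns and not merely their number, because \eqref{pr: main reduced} must know which $c_\ell$ to promote to free variables, and one has to confirm that charging exactly $\abs{T}$ units of budget to $x_C$ is both necessary for feasibility in \eqref{pr: main} and sufficient to recover an optimal solution.
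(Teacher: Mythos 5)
Your proposal is correct and takes essentially the same route as the paper's own proof: the paper likewise enumerates the $2^k$ subsets $\L\subseteq\{1,\dots,k\}$ of active $C$-columns, charges the remaining budget $\es-|\L|$ to the variables of $A$, proves the same two-directional value-preserving correspondence (extension by zeros one way, restriction to the guessed support the other), and absorbs $\mu$ and $c$ as one extra free column via $\L':=\L\cup\{0\}$. The only cosmetic differences are that you explicitly justify squaring the norm and discard subsets with $|T|>\es$, both of which the paper leaves implicit.
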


\begin{proof}
Consider problem~\eqref{pr: main} where the matrix $M$ is of the form \eqref{eq: matrix M}.
Recall that $k$ is the number of columns of $C$, and that $n_i$ is the number of columns of block $A^i$, for $i=1,\dots,h.$ 
Let $n := \sum_{i=1}^h n_i$ be the number of columns of $A$.
The structure of the matrix $M$ implies that the objective function of problem~\eqref{pr: main} can be written in the form 
\begin{align*}
\norm{Ax - \pare{b - c \mu - \sum_{\ell=1}^k c_\ell x_{n+\ell}}},
\end{align*}
For each subset $\L$ of $\{1,\dots,k\}$, we consider the subproblem obtained from \eqref{pr: main} by setting $x_{n+\ell} = 0$ for all $\ell \in \{1,\dots,k\} \setminus \L$, and by restricting the cardinality constraint to the $n$-dimensional vector $(x_1,\dots,x_n)$.
Formally,
\begin{align}
\label{pr: reduction inproof}
\begin{split}
\min \quad & \norm{Ax - \pare{b - c \mu - \sum_{\ell \in \L} c_\ell x_{n+\ell}}} \\
\suc \quad & x_1,\dots,x_{n} \in \R, \ x_{n + \ell} \in \R, \forall \ell \in \L, \ \mu \in \R \\
& |\supp(x_1,\dots,x_{n})| \le \es - |\L|.
\end{split}
\end{align} 

We claim that to solve problem~\eqref{pr: main} we just need to solve the $2^k$ distinct subproblems of the form \eqref{pr: reduction inproof} and return the best among the $2^k$ optimal solutions found.
To see this, let $(\bar x,\bar \mu)$ be feasible for \eqref{pr: main} and consider the subproblem~\eqref{pr: reduction inproof} corresponding to the set $\L := \{ \ell \in \{1,\dots,k\} : \bar x_{n+\ell} \neq 0\}$.
Then the restriction of $(\bar x,\bar \mu)$ obtained by dropping the zero components $\bar x_{n+\ell}$, for $\ell \in \{1,\dots,k\} \setminus \L$, is feasible for this subproblem and has the same objective value.
Now let $(\hat x, \hat \mu)$ be feasible for a subproblem~\eqref{pr: reduction inproof} corresponding to a subset $\L$ of $\{1,\dots,k\}$.
Consider now the extension of $(\hat x, \hat \mu)$ obtained by adding components $\hat x_{n+\ell} = 0$ for all $\ell \in \{1,\dots,k\} \setminus \L$.
This vector is feasible for \eqref{pr: main} and has the same objective value.
This concludes the proof of the equivalence of \eqref{pr: main} with all the $2^k$ subproblems \eqref{pr: reduction inproof}.

Notice that, the assumption in Theorem \ref{th: main} that each of the blocks $A^1,\ldots,A^h$ and $C$ have a constant number of columns means that in each subproblem~\eqref{pr: reduction inproof}, $n_1,\dots,n_h$ and $|\L|$ are fixed numbers.

Consider now a subproblem~\eqref{pr: reduction inproof}.
Let $\L' := \L \cup \{0\}$, and $k' := |\L'| = |\L|+1$.
We introduce a $k'$-dimensional vector $\lambda$ as
\begin{align*}
\lambda_\ell :=
\begin{cases}
\mu & \text{if } \ell = 0, \\
x_{n + \ell} & \text{if } \ell \in \L,
\end{cases}
\end{align*}
and we define $k'$ vectors $c'_\ell \in \R^m$, for $\ell \in \L'$, as
\begin{align*}
c'_\ell :=
\begin{cases}
c & \text{if } \ell = 0, \\
c_\ell & \text{if } \ell \in \L.
\end{cases}
\end{align*}
Finally, we define $\es':= \es - |\L|$.
It follows that a subproblem~\eqref{pr: reduction inproof} can be written in the form \eqref{pr: main reduced}, where the parameters $k,\es,c_\ell$ in \eqref{pr: main reduced} correspond to the parameters  $k',\es',c'_\ell$ just defined.
\end{proof}

To avoid confusion, we remark that the parameters $k,\es,c^\ell$ in \eqref{pr: main reduced} do not need to match the corresponding parameters $k,\es,c^\ell$ in model \eqref{pr: main}.

\section{The diagonal case}
\label{sec: diag}

In this section we prove a weaker version of Theorem~\ref{th: main} obtained by further assuming that the block $A$ of the matrix $M$ given by \eqref{eq: matrix M} is diagonal, i.e., each $A^i$ block  is a just a $1 \times 1$-matrix.
Then $A \in \R^{n \times n}$, and we denote by $a_{11},\dots,a_{nn}$ the diagonal entries of $A$.

\begin{theorem}
\label{th: diag}
There is an algorithm that solves problem~\eqref{pr: main} in polynomial time,
provided that 
$M$ is of the form \eqref{eq: matrix M},
$A$ is diagonal,
and $k$ is a fixed number.
\end{theorem}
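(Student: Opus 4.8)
The plan is to follow the roadmap of Section~\ref{sec: reduction}, specialized to the diagonal case. By Lemma~\ref{lem: reduction} it suffices to give a polynomial-time algorithm for problem~\eqref{pr: main reduced} when $k$ is fixed and every block $A^i$ is $1\times 1$, so that $A=\mathrm{diag}(a_{11},\dots,a_{nn})$, $m=n=h$, and $b,c_1,\dots,c_k\in\R^n$. Writing $r(\lambda):=b-\sum_{\ell=1}^k c_\ell\lambda_\ell$, the objective splits as $\norm{Ax-r(\lambda)}^2=\sum_{i=1}^n (a_{ii}x_i-r_i(\lambda))^2$, which is the decomposability that makes the diagonal case tractable.

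First I would solve $\opt(\es)_{|\lambda}$, the problem with $\lambda$ (hence $r=r(\lambda)$) fixed. For a prescribed support $S$ with $\abs{S}\le\es$, the best choice sets $x_i=r_i/a_{ii}$ for $i\in S$ with $a_{ii}\neq 0$ (zeroing the $i$-th term) and $x_i=0$ otherwise, so the optimal value equals $\sum_i r_i^2-\sum_{i\in S,\, a_{ii}\neq 0} r_i^2$. Hence an optimal support consists of the $\es$ indices $i$ with $a_{ii}\neq 0$ having the largest values $r_i^2$, and $\opt(\es)_{|\lambda}$ is solved by a single sort of the quantities $r_i^2$. This explicit characterization is what, in the diagonal case, takes the place of the technical proximity result required in the general block case.

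Next I would partition $\lambda$-space. Each $r_i(\lambda)$ is affine in $\lambda$, so for any pair $i\neq j$ the sign of $r_i^2(\lambda)-r_j^2(\lambda)=(r_i-r_j)(r_i+r_j)$ can change only across the two hyperplanes $\{r_i(\lambda)=r_j(\lambda)\}$ and $\{r_i(\lambda)=-r_j(\lambda)\}$. Collecting these $O(n^2)$ hyperplanes and invoking the hyperplane arrangement theorem (Theorem~3.3 in \cite{EdeOroSei86}) produces an arrangement with $O(n^{2k})$ cells, computable within the same time bound. Inside each open full-dimensional cell the relative order of all $r_i^2(\lambda)$ is constant, so the top-$\es$ support above is a fixed set; evaluating it at one representative point per cell yields a family $\X$ of $O(n^{2k})$ candidate supports. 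For each $\chi\in\X$ I would then solve the unconstrained linear least squares problem obtained from \eqref{pr: main reduced} by deleting the support constraint and fixing $x_i=0$ for $i\notin\chi$ — a polynomial-time computation \cite{ShaBenMLbook} over the free variables $x_i$, $i\in\chi$, and $\lambda\in\R^k$ — and return the best solution found.

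For correctness I would argue that some $\chi\in\X$ supports a global optimum. Let $(x^*,\lambda^*)$ be optimal with value $v^*$. The function $\lambda\mapsto\opt(\es)_{|\lambda}$ is a minimum of finitely many polynomials, hence continuous, and on each open cell $R$ it coincides with $\sum_{i\notin S_R} r_i^2(\lambda)$ for that cell's candidate support $S_R$; by continuity this identity extends to the closure $\bar R$. Choosing a full-dimensional cell with $\lambda^*\in\bar R$ gives $v^*=\opt(\es)_{|\lambda^*}=\sum_{i\notin S_R} r_i^2(\lambda^*)$, so the least squares solution for $\chi=S_R$ attains value at most $v^*$, hence exactly $v^*$. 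The delicate point I expect to be the main obstacle is precisely this boundary passage: an optimal $\lambda^*$ may lie on a lower-dimensional face where ties in the $r_i^2(\lambda)$ create several competing optimal supports, and the continuity-to-the-closure argument is what guarantees that a support read off from an adjacent full-dimensional cell remains optimal at $\lambda^*$. With that settled — and with the hyperplanes counted correctly — the remaining estimates giving the polynomial running time are routine.
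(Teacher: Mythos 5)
Your proposal is correct and follows essentially the same route as the paper: reduction via Lemma~\ref{lem: reduction}, the explicit sorted-support solution of $\opt(\es)_{|\lambda}$ (the paper's Lemma~\ref{lem: diag simple}), a hyperplane arrangement in $\lambda$-space with $O(n^2)$ hyperplanes and $O(n^{2k})$ cells yielding candidate supports, and one linear least squares solve per support. The only differences are cosmetic: you use two hyperplanes per pair $\{r_i=\pm r_j\}$ where the paper lists four (also including $r_i=0$ and $r_j=0$), and you handle boundary points of cells by a continuity-to-the-closure argument, whereas the paper works directly with the closed polyhedra of the arrangement, on which the non-strict order suffices for Lemma~\ref{lem: diag simple} to apply.
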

In particular, our algorithm solves $O(2^k n^{2(k+1)})$ linear least squares problems.
The presence of $k$ in the exponent is expected, since the problem is NP-hard if $k$ is not a constant but a parameter which is part of the input.

For brevity, in this section, every time we refer to problem~\eqref{pr: main}, problem~\eqref{pr: main reduced}, and problem $\opt(\es)_{|\lambda}$, we assume that the matrices $M$ and $A$ satisfy the assumptions of Theorem \ref{th: diag}.

Instead of considering directly problem~\eqref{pr: main reduced}, 
we first consider the simpler problem $\opt(\es)_{|\lambda}$.
For notational simplicity, we define the vector 
\begin{align*}
b' := b - \sum_{\ell = 1}^k c_\ell \lambda_\ell.
\end{align*}

\begin{lemma}
\label{lem: diag simple}
Problem $\opt(\es)_{|\lambda}$ can be solved in polynomial time.
In particular, an optimal solution is given by
\begin{align*}
x^*_j := 
\begin{cases}
b'_j / a_{jj} & \text{if } j \in K \\
0 & \text{if } j \notin K,
\end{cases}
\end{align*}
where $K$ is a subset of $H := \{i \in \{1,\dots,n\} : a_{ii} \neq 0\}$ of cardinality $\max \{\es, |H|\}$ with the property that for each $i \in K$ and each $j \in H \setminus K$, we have $|b'_i| \ge |b'_j|$.
\end{lemma}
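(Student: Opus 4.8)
The plan is to exploit the one structural feature that makes the diagonal case easy: because $A$ is diagonal, the objective splits into a sum of independent one-dimensional terms. Writing $(Ax)_j = a_{jj} x_j$, we have
\begin{align*}
\norm{A x - b'}^2 = \sum_{j=1}^n (a_{jj} x_j - b'_j)^2,
\end{align*}
so coordinate $j$ contributes a term depending only on $x_j$. This decoupling is the backbone of the argument: it reduces the problem to deciding, independently for each coordinate, whether to spend one of the $\es$ support slots on it.

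Next I would classify the coordinates by membership in $H$. If $j \notin H$, i.e.\ $a_{jj} = 0$, then the $j$-th term equals $b_j'^2$ regardless of $x_j$, so it is never beneficial to place $j$ in the support, and I set $x_j = 0$. If $j \in H$, the term is driven to $0$ by the choice $x_j = b'_j / a_{jj}$ and is strictly positive otherwise. Hence the only effect of spending a support slot on an index $j \in H$ is to reduce its contribution from $b_j'^2$ (its value at $x_j = 0$) down to $0$, a saving of exactly $b_j'^2$. This makes the optimal strategy greedy: select the indices in $H$ with the largest $\abs{b'_j}$, which is precisely the set $K$ described in the statement. (Since $K \subseteq H$, its cardinality must be read as $\min\{\es,|H|\}$, as $\max\{\es,|H|\}$ would be infeasible whenever $\es > |H|$.)

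To make optimality rigorous I would argue via a lower bound rather than a greedy exchange. For an arbitrary feasible $x$ with support $S := \supp(x)$, I bound
\begin{align*}
\norm{A x - b'}^2 = \sum_{j \in S} (a_{jj} x_j - b'_j)^2 + \sum_{j \notin S} b_j'^2 \ge \sum_{j \notin S \cap H} b_j'^2 = \sum_{j=1}^n b_j'^2 - \sum_{j \in S \cap H} b_j'^2,
\end{align*}
using that every term is nonnegative and that terms with $a_{jj} = 0$ equal $b_j'^2$ whether or not $j \in S$. Since $S \cap H \subseteq H$ and $\abs{S \cap H} \le \abs{S} \le \es$, the defining property of $K$ gives $\sum_{j \in S \cap H} b_j'^2 \le \sum_{j \in K} b_j'^2$, whence $\norm{A x - b'}^2 \ge \sum_{j \notin K} b_j'^2$. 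A direct evaluation shows the proposed $x^*$ attains exactly this value, so $x^*$ is optimal; feasibility is immediate because $\supp(x^*) \subseteq K$ forces $\abs{\supp(x^*)} \le \abs{K} \le \es$.

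The computational claim is then routine: forming $b'$, identifying $H$, and selecting the largest $\min\{\es,|H|\}$ values $\abs{b'_j}$ by sorting all take polynomial time. I do not expect a genuine obstacle here; the only points needing care are bookkeeping ones, namely handling indices $j \in H$ with $b'_j = 0$ (which may lie in $K$ yet add nothing to the support, so the support can be strictly smaller than $\abs{K}$) and routing optimality through the lower bound above so as to avoid an ad hoc exchange argument.
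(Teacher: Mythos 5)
Your proof is correct and follows essentially the same route as the paper's: exploit the separability of the objective $\sum_{j=1}^n (a_{jj}x_j - b'_j)^2$, lower-bound the value of any feasible $x$ by $\sum_{j \notin H}(b'_j)^2 + \sum_{j \in H \setminus K}(b'_j)^2$ via the defining property of $K$, and check that $x^*$ attains this bound. You are also right that the cardinality of $K$ should read $\min\{\es,|H|\}$ rather than $\max\{\es,|H|\}$ — that is a typo in the statement, since $K \subseteq H$ forces $|K| \le |H|$ and feasibility of $x^*$ requires $|K| \le \es$.
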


\begin{proof}
The objective function of Problem $\opt(\es)_{|\lambda}$ is $\sum_{j = 1}^n (a_{jj} x_j - b'_j)^2$.
The separability of the objective function implies that the objective value of a vector $x$ will be at least $\sum_{j \notin H} (b'_j)^2 + \sum_{j \in H \setminus \supp(x)} (b'_j)^2$.
The support constraint and the definition of the index set $K$ then imply that the optimum value of $\opt(\es)_{|\lambda}$ is at least $\sum_{j \notin H} (b'_j)^2 + \sum_{j \in H \setminus K} (b'_j)^2$.
The latter is the objective value of the vector $x^*$, thus it is an optimal solution to $\opt(\es)_{|\lambda}$.
\end{proof}


We are now ready to prove Theorem~\ref{th: diag}.

\begin{proof}[Proof of Theorem~\ref{th: diag}]
Lemma~\ref{lem: reduction} implies that, in order to prove Theorem~\ref{th: diag}, it suffices to give a polynomial-time algorithm for problem~\eqref{pr: main reduced}, where $A$ is diagonal, provided that $k$ is a fixed number.
In the remainder of the proof we show how to solve problem~\eqref{pr: main reduced}.


\begin{claim}
\label{claim: diag 2}
We can construct in polynomial time 
an index set $T$ with $|T| = O(n^{2k})$,
polyhedra $Q^t \subseteq \R^k$, for $t \in T$, that cover $\R^k$, 
and sets $\chi^t \subseteq \{1,\dots,n\}$ of cardinality $\es$, for each $t \in T$,
with the following property:
For every $t \in T$, and for every $\lambda$ such that $\lambda \in Q^t$, the problem $\opt(\es)_{|\lambda}$ has an optimal solution with support contained in $\chi^t$.
\end{claim}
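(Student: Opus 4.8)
The plan is to read off, from Lemma~\ref{lem: diag simple}, that an optimal solution of $\opt(\es)_{|\lambda}$ is completely determined by the relative ordering of the numbers $|b'_i|$ over the indices $i \in H$: an optimal support is any subset $K \subseteq H$, of cardinality $\min\{\es,|H|\}$, that collects the largest such values. The point is that $b'_i = b_i - \sum_{\ell=1}^k c_{\ell i}\lambda_\ell$ is an affine function of $\lambda$, but the comparisons we care about involve the absolute values $|b'_i|$, which are not affine. The key observation that linearizes the problem is that, for $i,j \in H$, one has $|b'_i| \ge |b'_j|$ if and only if $(b'_i)^2 - (b'_j)^2 \ge 0$, and that this difference of squares factors as the product of the two affine functions $b'_i - b'_j$ and $b'_i + b'_j$. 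Hence the outcome of every pairwise comparison is decided by which side of \emph{two} hyperplanes the point $\lambda$ lies on.

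First I would introduce, for each unordered pair $\{i,j\}$ with $i,j \in H$ and $i \ne j$, the affine functions $f^{-}_{ij}(\lambda) := b'_i(\lambda) - b'_j(\lambda)$ and $f^{+}_{ij}(\lambda) := b'_i(\lambda) + b'_j(\lambda)$, and collect the zero sets of all the non-constant ones into a family $\mathcal H$ of hyperplanes in $\R^k$. Since there are $\binom{|H|}{2} \le \binom{n}{2}$ pairs and two functions per pair, $\mathcal H$ has $O(n^2)$ elements. Applying the hyperplane arrangement theorem (Theorem~3.3 of~\cite{EdeOroSei86}) to $\mathcal H$ then produces, using $O\pare{(n^2)^k} = O(n^{2k})$ operations, a cover of $\R^k$ by $O(n^{2k})$ polyhedra $Q^t$, indexed by a set $T$ with $|T| = O(n^{2k})$. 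This already delivers the required index set and polyhedra, and the promised running time.

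It remains to attach to each cell a support $\chi^t$ and to verify the covering property. The crucial structural fact is that on each cell $Q^t$ the sign of $(b'_i)^2-(b'_j)^2 = f^{-}_{ij}\,f^{+}_{ij}$ is constant for every pair $i,j \in H$: each factor either is a constant function (globally fixed sign) or vanishes on one of our hyperplanes, and a cell of the arrangement lies entirely on one side of, or entirely within, each such hyperplane; in the former case both factors keep a fixed sign on $Q^t$, and in the latter case $|b'_i| = |b'_j|$ identically on $Q^t$. Consequently the partial order ``$|b'_i| \ge |b'_j|$'' is the same at every $\lambda \in Q^t$. I would then pick any representative $\lambda^t \in Q^t$, let $K^t \subseteq H$ be a set of the $\min\{\es,|H|\}$ indices with largest $|b'_i(\lambda^t)|$ (breaking ties by index), and set $\chi^t$ to be $K^t$ padded by arbitrary indices up to cardinality $\es$ (we may assume $\es \le n$). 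By the constancy of the ordering, for every $\lambda \in Q^t$ the set $K^t$ satisfies the selection rule of Lemma~\ref{lem: diag simple}, so $\opt(\es)_{|\lambda}$ admits an optimal solution supported in $K^t \subseteq \chi^t$.

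The main obstacle, and the step deserving the most care, is precisely this last constancy argument: one must handle not only the generic full-dimensional cells but also the lower-dimensional faces of the arrangement, where some $f^{\pm}_{ij}$ vanish and the comparison becomes a tie. The product-of-affine-functions reformulation is what makes this manageable, since it reduces the non-linear comparison $|b'_i|$ versus $|b'_j|$ to the joint sign pattern of two hyperplanes that are already part of $\mathcal H$; ties are then harmless because two indices with $|b'_i| = |b'_j|$ throughout $Q^t$ contribute identically to the objective, so a single fixed choice $K^t$ stays optimal across the whole cell.
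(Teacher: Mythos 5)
Your proposal follows essentially the same route as the paper: reduce everything to the pairwise comparisons $|b'_i|$ versus $|b'_j|$ via Lemma~\ref{lem: diag simple}, encode each comparison by hyperplanes in $\lambda$-space, invoke the hyperplane arrangement theorem of \cite{EdeOroSei86} to get $O(n^{2k})$ cells, and read off one consistent order (hence one support $\chi^t$) per cell. The only structural difference is a mild economization: you use two hyperplanes per pair, $b'_i - b'_j = 0$ and $b'_i + b'_j = 0$, justified by the factorization $(b'_i)^2 - (b'_j)^2 = (b'_i - b'_j)(b'_i + b'_j)$, whereas the paper also includes $b'_i = 0$ and $b'_j = 0$, i.e.\ four hyperplanes per pair. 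Your two suffice for deciding the comparison, and the asymptotics $O(n^2)$ hyperplanes and $O(n^{2k})$ cells are identical, so this is a harmless simplification.

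There is, however, one step that fails as literally written: ``pick \emph{any} representative $\lambda^t \in Q^t$ \dots breaking ties by index.'' If $\lambda^t$ lies on a hyperplane that bounds $Q^t$ without containing it, the tie at $\lambda^t$ is spurious, and index-based tie-breaking can select the wrong support for the rest of the cell. Concretely, take $k=1$, $\es = 1$, $b'_1(\lambda) = \lambda$, $b'_2(\lambda) = 2\lambda$, $a_{11}, a_{22} \neq 0$: both your hyperplanes are $\{\lambda = 0\}$, the cell $Q^t = \{\lambda \ge 0\}$ has the boundary point $0$ as a legitimate representative, and tie-breaking by index yields $K^t = \{1\}$; but for every $\lambda > 0$ the unique optimal support is $\{2\}$, since the residual $(b'_1)^2 = \lambda^2$ beats $(b'_2)^2 = 4\lambda^2$. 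Your own preceding observation already contains the fix, and it is what the paper does: determine the order not from a point evaluation but from the \emph{sign vector} of the cell, i.e.\ from which closed half-space of each hyperplane contains $Q^t$ (equivalently, take $\lambda^t$ in the interior of the full-dimensional cell). Then for $i \in K^t$ and $j \notin K^t$ the inequality $|b'_i| \ge |b'_j|$ holds \emph{weakly} throughout the closed cell --- note also that your claim that the partial order is literally ``the same at every $\lambda \in Q^t$'' is not quite accurate on cell boundaries, where strict comparisons degenerate to ties; only the weak inequalities persist, but that is exactly what the selection rule of Lemma~\ref{lem: diag simple} requires, so the argument closes.
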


\begin{cpf}
%
Lemma~\ref{lem: diag simple} implies that in order to understand the optimal support of problem $\opt(\es)_{|\lambda}$ for a fixed vector $\lambda$, it is sufficient to compare all quantities $|b_i - \sum_{\ell =1}^k (c_\ell)_i \lambda_\ell|$, for $i \in \{1,\dots,n\}$ with $a_{ii} \neq 0$.
So let $i$ and $j$ be two distinct indices in $\{1,\dots,n\}$ with $a_{ii}, a_{jj} \neq 0$.
We wish to subdivide all points $\lambda \in \R^k$ based on which of the two quantities $|b_i - \sum_{\ell =1}^k (c_\ell)_i \lambda_\ell|$ and $|b_j - \sum_{\ell =1}^k (c_\ell)_j \lambda_\ell|$ is larger.
In order to do so, consider the inequality
\begin{align*}
\abs{b_i - \sum_{\ell =1}^k (c_\ell)_i \lambda_\ell} \ge \abs{b_j - \sum_{\ell =1}^k (c_\ell)_j \lambda_\ell}.
\end{align*}
It is simple to check that the set of points in $\R^k$ that satisfy the above inequality can be written as the union of polyhedra using linear inequalities corresponding to the four hyperplanes in $\R^k$ defined by equations
\begin{align*}
b_i - \sum_{\ell =1}^k (c_\ell)_i \lambda_\ell &= 0, 
& b_i - \sum_{\ell =1}^k (c_\ell)_i \lambda_\ell  &= b_j - \sum_{\ell =1}^k (c_\ell)_j \lambda_\ell, \\
b_j - \sum_{\ell =1}^k (c_\ell)_j \lambda_\ell &= 0, 
& b_i - \sum_{\ell =1}^k (c_\ell)_i \lambda_\ell  &= - \pare{b_j - \sum_{\ell =1}^k (c_\ell)_j \lambda_\ell}.
\end{align*}

By considering these four hyperplanes for all possible distinct pairs of indices in $\{1,\dots,n\}$, we obtain at most $4(n^2-n) = O(n^2)$ hyperplanes in $\R^k$.
These hyperplanes subdivide $\R^k$ into a number of full-dimensional polyhedra. 
By the hyperplane arrangement theorem, this subdivision consists of polyhedra $Q^t$, for $t \in T$, where $T$ is an index set with $|T| = O((n^2)^k) = O(n^{2k})$. 
Since $k$ is fixed, $|T|$ is polynomial in $n$ and the subdivision can be obtained in polynomial time.

We now fix one polyhedron $Q^t$, for some $t \in T$.
By checking, for each hyperplane that we have constructed above, in which of the two half-spaces lies $Q^t$, we obtain a total order on all the expressions $|b_i - \sum_{\ell =1}^k (c_\ell)_i \lambda_\ell|$, for $i \in \{1,\dots,n\}$ with $a_{ii} \neq 0$.
The obtained total order is global, in the sense that, for each fixed $\lambda$ with $\lambda \in Q^t$, it induces a consistent total order on the values obtained by fixing $\lambda$ in the expressions $|b_i - \sum_{\ell =1}^k (c_\ell)_i \lambda_\ell|$.
This total order induces an ordering $i^t_1,i^t_2,\dots,i^t_n$ of the indices $i \in \{1,\dots,n\}$  with $a_{ii} \neq 0$ such that, for every $\lambda \in Q^t$, we have
\begin{align*}
\abs{b_{i^t_1} - \sum_{\ell = 1}^k (c_\ell)_{i^t_1} \lambda_\ell} \ge \abs{b_{i^t_2} - \sum_{\ell = 1}^k (c_\ell)_{i^t_2} \lambda_\ell} \ge \dots \ge \abs{b_{i^t_n} - \sum_{\ell = 1}^k (c_\ell)_{i^t_n} \lambda_\ell}.
\end{align*}
By Lemma~\ref{lem: diag simple}, for each $\lambda$ such that $\lambda \in Q^t$, the  problem $\opt(\es)_{|\lambda}$ has an optimal solution with support contained in $\chi^t := \{i^t_1,i^t_2,\dots,i^t_\es\}$.
\end{cpf}

Let $\X$ be the set containing all index sets $\chi^t$ obtained in Claim~\ref{claim: diag 2}, namely
\begin{align*}
\X := \{ \chi^t \ :  \ t \in T\}.
\end{align*}

\begin{claim}
\label{claim: diag 3}
There exists an optimal solution $(x^*,\lambda^*)$ of problem~\eqref{pr: main reduced} such that  
\begin{align*}
\supp(x^*) \subseteq \chi \text{ for some } \chi \in \X.
\end{align*}
\end{claim}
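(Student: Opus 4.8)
The plan is to exploit the fact that problem~\eqref{pr: main reduced} can be viewed as the outer minimization over $\lambda \in \R^k$ of the inner problem $\opt(\es)_{|\lambda}$; that is, its optimal value equals $\min_{\lambda \in \R^k} \opt(\es)_{|\lambda}$. First I would record that an optimal solution of \eqref{pr: main reduced} actually exists: there are only finitely many candidate supports $\chi \subseteq \{1,\dots,n\}$ with $|\chi| \le \es$, and for each such $\chi$ the restricted problem of minimizing $\norm{Ax - \pare{b - \sum_{\ell=1}^k c_\ell \lambda_\ell}}^2$ over all $(x,\lambda)$ with $\supp(x) \subseteq \chi$ is an ordinary linear least squares problem, whose infimum is attained (it is the squared distance from $b$ to a linear subspace). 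Taking the best over the finitely many supports shows the overall minimum is attained.

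Next I would pick any optimal solution $(\bar x, \bar\lambda)$ of \eqref{pr: main reduced}. By the decomposition above, $\bar x$ is an optimal solution of $\opt(\es)_{|\bar\lambda}$, and the optimal value of \eqref{pr: main reduced} equals the optimal value of $\opt(\es)_{|\bar\lambda}$. Since the polyhedra $Q^t$, $t \in T$, cover $\R^k$ by Claim~\ref{claim: diag 2}, there is an index $t \in T$ with $\bar\lambda \in Q^t$. Claim~\ref{claim: diag 2} then provides an optimal solution $x^*$ of $\opt(\es)_{|\bar\lambda}$ whose support is contained in $\chi^t \in \X$.

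Finally I would set $\lambda^* := \bar\lambda$ and verify that $(x^*,\lambda^*)$ is the desired solution. It is feasible for \eqref{pr: main reduced}, since $|\supp(x^*)| \le |\chi^t| = \es$. Its objective value equals the optimal value of $\opt(\es)_{|\bar\lambda}$ (because $x^*$ is optimal for that problem), which in turn equals the optimal value of \eqref{pr: main reduced}; hence $(x^*,\lambda^*)$ is optimal and satisfies $\supp(x^*) \subseteq \chi^t$ with $\chi^t \in \X$, as required. I do not expect a genuine obstacle here: the argument is just a matching of objective values between the full problem and the parametrized subproblem $\opt(\es)_{|\lambda}$, combined with the covering property of the $Q^t$. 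The only points needing a word of care are the attainment of the minimum (handled above) and the fact that optimality of $\bar x$ for the inner problem follows from optimality of $(\bar x, \bar\lambda)$ for \eqref{pr: main reduced}, which is immediate because fixing $\lambda = \bar\lambda$ in \eqref{pr: main reduced} yields exactly $\opt(\es)_{|\bar\lambda}$.
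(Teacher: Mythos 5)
Your proof is correct and takes essentially the same route as the paper's: fix an optimal $(\bar x,\bar\lambda)$ of problem~\eqref{pr: main reduced}, observe that $\bar x$ is optimal for $\opt(\es)_{|\bar\lambda}$, locate $\bar\lambda$ in some covering polyhedron $Q^t$, and replace $\bar x$ by the optimal solution with support in $\chi^t$ supplied by Claim~\ref{claim: diag 2}. Your extra verification that the minimum of \eqref{pr: main reduced} is attained (finitely many supports, each restriction a linear least squares problem whose infimum is achieved) is a detail the paper leaves implicit, but it does not change the argument.
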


\begin{cpf}
Let $(x^*,\lambda^*)$ be an optimal solution of problem~\eqref{pr: main reduced}. 
Then $x^*$ is an optimal solution of the restricted problem $\opt(\es)_{|\lambda^*}$.
Let $Q^t$, for $t \in T$, be a polyhedron such that $\lambda^* \in Q^t$, and let $\chi^t \in \X$ be the corresponding index set.
By Claim~\ref{claim: diag 2}, the problem $\opt(\es)_{|\lambda^*}$ has an optimal solution $\tilde x$ with support contained in $\chi^t$.
This implies that the solution $(\tilde x,\lambda^*)$ is also optimal for problem~\eqref{pr: main reduced}.
\end{cpf}

For each $\chi \in \X$, each problem~\eqref{pr: main reduced}, with the additional constraints $x_i = 0$, for all $i \notin \chi$, is a linear least squares problem, since the cardinality constraint can be dropped, and it can then be solved in polynomial time.
The best solution among the obtained ones is an optimal solution of \eqref{pr: main reduced}.
To find an optimal solution to~\eqref{pr: main reduced}, we have solved $O(n^{2k})$ linear least squares problems.

Since in the proof of Lemma~\ref{lem: reduction} we have $k' \le k+1$, to find an optimal solution to~\eqref{pr: main} we need to solve $O(2^k n^{2(k+1)})$ linear least squares problems.
This concludes the proof of Theorem~\ref{th: diag}. 
\end{proof}

\bigskip

Note that the algorithm for $\opt(\es)_{|\lambda}$ given in Lemma~\ref{lem: diag simple} plays a key role in the proof of Theorem~\ref{th: diag}.
In fact, it allowed us to subdivide the space of the $\lambda$ variables in a polynomial number of regions such that in each region the algorithm given in Lemma~\ref{lem: diag simple} yields the same optimal support.
This part of the proof will be much more involved for Theorem~\ref{th: main}.
In fact, in the proof of our main result, in order to derive a similar subdivision, we need to study the proximity of optimal solutions with respect to two consecutive ``support-conditions'' $\abs{\supp(x)} \le s$ and $\abs{\supp(x)} \le s+1$.


\section{The block diagonal case}
\label{sec: block}

In this section we prove 
Theorem~\ref{th: main}.
Every time we refer to problem~\eqref{pr: main}, problem~\eqref{pr: main reduced}, and problem $\opt(\es)_{|\lambda}$, we assume that the matrices $M$ and $A$ satisfy the assumptions of Theorem \ref{th: main}.
In particular, the matrix $M$ is as in \eqref{eq: matrix M},
and the matrix $A \in \R^{m \times n}$ is block diagonal with blocks $A^i \in \R^{m_i \times n_i}$, for $i=1,\dots,h$.

Our overall strategy is similar to the one we used to prove Theorem~\ref{th: diag} and is described in Section~\ref{sec: reduction}.
Our first task is the design of a polynomial-time algorithm to solve the simpler problem $\opt(\es)_{|\lambda}$.

Note that there are many possible polynomial-time algorithms that one can devise for problem~$\opt(\es)_{|\lambda}$, and a particularly elegant one can be obtained with a dynamic programming approach similar to the classic dynamic programming recursion for knapsack \cite{MarTotBook}.
For each $i=1,\dots,h$, let $x^i \in \R^{n_i}$, $b^i \in \R^{m_i}$, and $c_\ell^i \in \R^{m_i} \ \forall \ell=1,\dots,k$ such that 
\begin{align*}
x=
\begin{pmatrix}
x^1 \\
\vdots \\
x^h
\end{pmatrix},
\qquad
b=
\begin{pmatrix}
b^1 \\
\vdots \\
b^h
\end{pmatrix},
\qquad
c_\ell=
\begin{pmatrix}
c_\ell^1 \\
\vdots \\
c_\ell^h
\end{pmatrix}.
\end{align*}
Denote by $\opt(1,\dots,i;j)$ the subproblem of $\opt(\es)_{|\lambda}$ on blocks $A^1, \dots, A^i$, with $i \in \{1,\dots,h\}$ and support $j$, with $j \in \{0,\dots,\es\}$, namely,
\begin{align*}
\min \quad & \norm{
\begin{pmatrix}
A^1 \\
& \ddots \\
&& A^i
\end{pmatrix}
x -
\pare{
\begin{pmatrix}
b^1 \\
\vdots \\
b^i
\end{pmatrix}
-
\sum_{\ell = 1}^k
\begin{pmatrix}
c_\ell^1 \\
\vdots \\
c_\ell^i
\end{pmatrix}
\lambda_\ell
}
}^2 \\
\suc \quad & x \in \R^{n_1 + \cdots + n_i} \\
& |\supp(x)| \le j.
\end{align*}
By exploiting the separability of the objective function, it can be checked that the recursion
\begin{align*}
\opt(1,\dots,i;j) & = \min_{t=0,\dots,j}\{\opt(1,\dots,i-1;j-t) + \opt(i;t)\}, \qquad \forall j \in \{0,\dots,\es\},
\end{align*}
for $i = 1,\dots,h$, yields an optimal solution for problem $\opt(\es)_{|\lambda}$.

We view this dynamic programming recursion as a ``horizontal approach'' where we fix the support but then proceed a recursion over the blocks. 
We did not see how this algorithm can be used in the proof of Theorem~\ref{th: main} to cover the space of the $\lambda$ variables with a sub-exponential number of regions such that in each region the algorithm yields the same optimal support.
In the next section, we present a combinatorial algorithm that not only can solve problem~$\opt(\es)_{|\lambda}$, but that also allows us to obtain a polynomial covering of the space of the $\lambda$ variables.
This algorithm is based on a sort of ``vertical approach'' where we incorporate all blocks simultaneously and then develop a recursion to enlarge the support condition.

\subsection{A proximity theorem}
\label{sec: block simple}

For every $i \in \{1,\dots,h\}$, and every $j \in \{0,\dots,n_i\}$, let $\val(i;j)$ be a real number.
For $\es \in \{0, \dots, \sum_{i=1}^h n_i \}$, we define
\begin{align}
\label{eq: sepa}
\val(\es) := \min \left\{ \sum_{i=1}^h \val(i;j_i) \ : \ \sum_{i=1}^h j_i = \es, \ j_i \in \{0,\dots,n_i\} \right\}.
\end{align}
Note that problem~$\opt(\es)_{|\lambda}$ can be polynomially transformed to a problem $\val(\es)$.
This can be seen by exploiting the block diagonal structure of the matrix $A$, and by defining $\val(i;j_i)$ to be the optimal value of the problem restricted to block $i$ and support $j_i$. 
The details of this reduction are given at the beginning of the proof of Theorem~\ref{th: main}.

\begin{definition}
Let $s$ and $q$ be nonnegative integers.
Given a feasible solution $j^s = (j^s_1,\dots,j^s_h)$ for $\val(s)$, we say that a feasible solution $j^{s+1} = (j^{s+1}_1,\dots,j^{s+1}_h)$ for $\val(s+1)$ is \emph{$q$-close} to $j^s$ if
\begin{align*}
\sum_{i \in I^-} (j^s_i - j^{s+1}_i) & = q,
\end{align*}
where $I^- := \{i \in \{1,\dots,h\} : j^{s+1}_i < j^s_i \}$.
\end{definition}
Clearly, if $j^{s+1}$ is $q$-close to $j^s$, we also have that
\begin{align*}
\sum_{i \in I^+} (j^{s+1}_i - j^s_i) & = q + 1,
\end{align*}
where $I^+ := \{i \in\{1,\dots,h\} : j^{s+1}_i > j^s_i \}$. This yields to a $l_1$-proximity bound of $\|j^{s+1} - j^{s} \ \|_1 \leq 2q+1.$

A \emph{weak composition} of an integer $q$ into $p$ parts is a sequence of $p$ non-negative integers that sum up to $q$.
Two sequences that differ in the order of their terms define different weak compositions.
It is well-known that the number of weak compositions of a number $q$ into $p$ parts is $\binom{q+p-1}{p-1} = \binom{q+p-1}{q}$.
For more details on weak compositions see, for example, \cite{HeuManBook}.

Our next result establishes  that optimal solutions for $\val(s)$ and $\val(s+1)$ are close to each other. 

%
%

\begin{lemma} [Proximity of optimal solutions] 
\label{lem: increase}
Given an optimal solution $j^s$ for $\val(s)$, there exists an optimal solution $j^{s+1}$ for $\val(s+1)$ that is $q$-close to $j^s$, for some $q \in \{0,\dots, (\theta - 1) \theta (\theta + 1)/2\}$, where $\theta := \max\{n_i : i=1,\dots,h\}$.
\end{lemma}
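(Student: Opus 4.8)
The plan is to argue by a minimal-distance exchange. Among all optimal solutions of $\val(s+1)$ I would select one, call it $j^{s+1}$, that is $q$-close to $j^s$ with $q$ as small as possible, and then assume for contradiction that $q > (\theta-1)\theta(\theta+1)/2$. As a sanity check that guides the whole argument, observe that if each block value function $j \mapsto \val(i;j)$ were convex, then its marginal costs would be nondecreasing and a standard resource-allocation argument would produce a monotone optimal solution $j^{s+1} \ge j^s$ with $q=0$. Hence all of the difficulty, and the entire dependence on $\theta$, must come from the possible nonconvexity of the finitely many block functions, each of which is defined on at most $\theta+1$ levels.

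First I would extract the local optimality conditions produced by single-unit swaps. Fix $i \in I^-$ and $i' \in I^+$. Moving one unit of $j^{s+1}$ from $i'$ back down toward $j^s$ and one unit of $i$ back up toward $j^s$ keeps the solution feasible for $\val(s+1)$ and strictly decreases $q$; by the minimality of $q$ this perturbation cannot be optimal, so its cost strictly exceeds that of $j^{s+1}$. Symmetrically, pushing one unit of $j^s$ from $i$ to $i'$ stays feasible for $\val(s)$, so by optimality of $j^s$ its cost is at least that of $j^s$. These two observations give, for every $i \in I^-$ and $i' \in I^+$,
\begin{align*}
&\bigl[\val(i;j^{s+1}_i+1)-\val(i;j^{s+1}_i)\bigr] + \bigl[\val(i';j^{s+1}_{i'}-1)-\val(i';j^{s+1}_{i'})\bigr] > 0, \\
&\bigl[\val(i;j^s_i-1)-\val(i;j^s_i)\bigr] + \bigl[\val(i';j^s_{i'}+1)-\val(i';j^s_{i'})\bigr] \ge 0.
\end{align*}
In the convex case these two inequalities force $q=0$; in general they only constrain the marginal costs at the two \emph{different} configurations $j^s$ and $j^{s+1}$, and the gap between them is exactly the nonconvexity that must be controlled.

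Next I would upgrade single swaps to balanced multi-unit reversals: choosing a subset of blocks of $I^+$ to be pulled partway back toward $j^s$, together with an equal total number of units restored to blocks of $I^-$, produces another feasible solution of $\val(s+1)$ that is strictly closer to $j^s$, so by minimality of $q$ every such reversal must strictly increase the cost. Each changed block moves by at most $\theta$ and realizes one of at most $\binom{\theta+1}{2}$ possible level transitions $(j^s_i,j^{s+1}_i)$, so once $q$ is large the blocks of $I^-$ and $I^+$ must repeat transition types; this is where the weak-composition bookkeeping enters, since the per-block decrements form a composition of $q$ into $|I^-|$ parts. I would then combine a pigeonhole over transition types with the two marginal inequalities above to exhibit a balanced reversal of nonpositive net cost, contradicting minimality. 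Tracking, level by level, how much reshuffling can occur before such a beneficial reversal is forced is what yields the explicit cubic threshold $(\theta-1)\theta(\theta+1)/2 = 3\binom{\theta+1}{3}$.

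The main obstacle is precisely the nonconvexity of the block functions: because the marginal cost of successive units within a block need not be monotone, a single-unit swap generically fails to improve the solution, and one must instead certify the existence of a beneficial \emph{multi-unit} exchange. Making the pigeonhole quantitative enough to produce the stated cubic bound — rather than a weaker estimate that also depends on the number of blocks $h$ — is the delicate part, and I expect it to require carefully grouping the changed blocks by their level transitions and summing the admissible per-level movement, which is what produces the factor $3\binom{\theta+1}{3}$ independently of $h$.
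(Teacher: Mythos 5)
Your overall frame coincides with the paper's: pick an optimal $j^{s+1}$ at minimal distance from $j^s$, suppose $q > \bar\theta := (\theta-1)\theta(\theta+1)/2$, pigeonhole on how the changed blocks moved, and contradict minimality with a balanced reversal. Your two single-unit inequalities are also valid. But there is a genuine gap at the central step: the mechanism you propose for producing a beneficial reversal cannot work. You plan to ``combine a pigeonhole over transition types with the two marginal inequalities,'' yet those inequalities constrain $\val(i;\cdot)$ only at the four levels $j^{s+1}_i$, $j^{s+1}_i+1$, $j^s_i-1$, $j^s_i$, and --- as you yourself note --- nonconvexity prevents chaining them through intermediate levels. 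For the same reason your ``pulled partway back toward $j^s$'' reversals cannot be costed at all: neither the optimality of $j^s$ nor that of $j^{s+1}$ gives any information about $\val(i;j)$ at levels strictly between $j^{s+1}_i$ and $j^s_i$. Also, pigeonholing on the pairs $(j^s_i,j^{s+1}_i)$, of which there are $\binom{\theta+1}{2}$ per side, would force a threshold larger than the stated cubic one; the correct classification is by the difference magnitude $p=|j^{s+1}_i-j^s_i|\in\{1,\dots,\theta\}$ alone. With $x_p,y_p$ counting blocks of $I^+,I^-$ that moved by exactly $p$, the assumption $q=\sum_p p\,y_p > \bar\theta$ forces some $y_v\ge\theta$, and $\sum_p p\,x_p = q+1$ forces some $x_u\ge\theta$; since $u,v\le\theta$ one can select $v$ blocks of $I^+$ with increment exactly $u$ and $u$ blocks of $I^-$ with decrement exactly $v$.

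The missing idea is then to revert the selected blocks \emph{fully} to their $j^s$ values (not partway): the move is balanced ($uv$ units each way), so the resulting $\tilde j^{s+1}$ is feasible for $\val(s+1)$ and strictly closer to $j^s$, and its cost can be certified without any marginal inequalities. Indeed, applying the mirror move to $j^s$ (pushing the selected blocks to their $j^{s+1}$ values) yields a feasible solution for $\val(s)$, so optimality of $j^s$ gives $\sum_{i\in\tilde I^+\cup\tilde I^-}\bigl(\val(i;j^{s+1}_i)-\val(i;j^s_i)\bigr)\ge 0$, which is exactly the negative of the cost change incurred by the reversal; hence $\tilde j^{s+1}$ is again optimal, contradicting minimality. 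This certificate involves only the values $\val(i;j^s_i)$ and $\val(i;j^{s+1}_i)$ --- precisely the levels at which optimality information exists --- and this is what your plan lacks. Two minor points: the weak-composition bookkeeping you invoke belongs to the counting argument of Lemma~\ref{lem: possibilities total}, not to this proximity proof; and your convexity sanity check is a reasonable guide but plays no role in the actual argument.
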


\begin{proof}
Let $j^{s+1}$ be an optimal solution for $\val(s+1)$ such that   
\begin{equation}
\label{minimal}
\sum_{i =1}^h |j^{s+1}_i - j^s_i| \text{ is minimal.}
\end{equation}
Let $I^+ := \{i \in\{1,\dots,h\} : j^{s+1}_i > j^s_i \}$ and $I^- := \{i \in \{1,\dots,h\} : j^{s+1}_i < j^s_i \}$.
Note that 
\begin{align*}
\sum_{i \in I^+} (j^{s+1}_i - j^s_i) = \sum_{i \in I^-} (j^s_i - j^{s+1}_i) + 1.
\end{align*}

For $p=1,\dots,\theta$, let
\begin{align*}
x_p & := |\{i \in I^+ : j^{s+1}_i - j^s_i = p \}| \\
y_p & := |\{i \in I^- : j^s_i - j^{s+1}_i  = p \}|,
\end{align*}
This yields to the two equations 
\begin{align*}
\sum_{i \in I^+} (j^{s+1}_i - j^s_i)  = \sum_{p = 1}^\theta p x_p \quad \text{ and } \quad 
\sum_{i \in I^-} (j^s_i - j^{s+1}_i)  = \sum_{p = 1}^\theta p y_p.
\end{align*}

If $\sum_{p = 1}^\theta p y_p \le (\theta - 1) \theta (\theta + 1)/2$, then the statement is verified. 
Otherwise,  $\sum_{p = 1}^\theta p y_p > (\theta - 1) \theta (\theta + 1)/2$.
This implies that there exists $v \in \{1,\dots,\theta\}$ with $y_v \ge \theta$.
To see this, note that if $y_p \le \theta - 1$, for all $p=1,\dots,\theta$, then we obtain
\begin{align*}
\sum_{p = 1}^\theta p y_p \le (\theta - 1) \sum_{p = 1}^\theta p = (\theta - 1) \theta (\theta + 1)/2.
\end{align*}
From the fact that $\sum_{p = 1}^\theta p x_p = \sum_{p = 1}^\theta p y_p + 1 > (\theta - 1) \theta (\theta + 1)/2$, it also follows that there exists $u \in \{1,\dots,\theta\}$ with $x_u \ge \theta$.

In particular we have that 
\begin{align*}
x_u \ge \theta \ge v \text{ and } y_v \ge u.
\end{align*}
Thus there exists a subset $\tilde I^+$ of $I^+$ such that $|\tilde I^+| = v$, and $j^{s+1}_i - j^s_i = u$ for all $i \in \tilde I^+$.
Symmetrically, there exists a subset $\tilde I^-$ of $I^-$ such that $|\tilde I^-| = u$, and $j^s_i - j^{s+1}_i = v$ for all $i \in \tilde I^-$.

Let $\tilde j^{s+1}$ be obtained from $j^{s+1}$ as follows
\begin{align*}
\tilde j^{s+1}_i = 
\begin{cases}
j^{s+1}_i - u = j^{s}_i & \text{if } i \in \tilde I^+ \\
j^{s+1}_i + v = j^{s}_i & \text{if } i \in \tilde I^- \\
j^{s+1}_i & \text{otherwise}.
\end{cases}
\end{align*}
Since $\sum_{i=1}^h \tilde j^{s+1}_i = \sum_{i=1}^h j^{s+1}_i - uv + uv = s+1$, we have that $\tilde j^{s+1}$ is a feasible solution for $\val(s+1)$.
Moreover, we have that 
\begin{align*}
\sum_{i =1}^h |\tilde j^{s+1}_i - j^s_i| = \sum_{i =1}^h |j^{s+1}_i - j^s_i| - 2uv < \sum_{i =1}^h |j^{s+1}_i - j^s_i|.
\end{align*}
In the remainder of the proof, we show that $\tilde j^{s+1}$ is an optimal solution for $\val(s+1)$.
This will conclude the proof, since it contradicts the choice of $j^{s+1}$ in Eq. (\ref{minimal}). 

Let $\tilde j^s$ be obtained from $j^s$ as follows
\begin{align*}
\tilde j^s_i = 
\begin{cases}
j^s_i + u = j^{s+1}_i & \text{if } i \in \tilde I^+ \\
j^s_i - v = j^{s+1}_i & \text{if } i \in \tilde I^- \\
j^s_i & \text{otherwise}.
\end{cases}
\end{align*}
Since $\sum_{i=1}^h \tilde j^s_i = \sum_{i=1}^h j^s_i + uv - uv = s$, we have that $\tilde j^s$ is a feasible solution for $\val(s)$.
Since $j^s$ is an optimal solution for $\val(s)$ we obtain
\begin{align*}
\sum_{i=1}^h \val(i;\tilde j^s_i) 
-
\sum_{i=1}^h \val(i;j^s_i) 
 & =  \sum_{i \in \tilde I^+ \cup \tilde I^-} (\val(i;j^{s+1}_i) - \val(i;j^s_i)) \ge 0.
\end{align*}

Consider now the objective value of the feasible solution $\tilde j^{s+1}$ for $\val(s+1)$.
\begin{align*}
\sum_{i=1}^h \val(i;\tilde j^{s+1}_i) 
& = \sum_{i=1}^h \val(i;j^{s+1}_i) 
+ \sum_{i \in \tilde I^+ \cup \tilde I^-} (\val(i;j^s_i) - \val(i;j^{s+1}_i)) \\
& \le \sum_{i=1}^h \val(i;j^{s+1}_i).
\end{align*}
This shows that the solution $\tilde j^{s+1}$ is optimal for $\val(s+1)$.
\end{proof}

Assume now that we know an optimal solution $j^s$ for $\val(s)$ and we wish to obtain an optimal solution for $\val(s+1)$.
By Lemma~\ref{lem: increase}, we just need to consider the feasible solutions for $\val(s+1)$ that are $q$-close to $j^s$, for some $q \le (\theta - 1) \theta (\theta + 1)/2 =:\bar \theta$.
We denote the family of these solutions by $\aug(j^s)$, formally
\begin{align}
\label{eq: aug}
\begin{split}
\aug(j^s) := \bigg\{j^{s+1} \ : \ & \  j^{s+1}_i \in \{0,\dots,n_i\}, i \in \{1,\dots,h\}, \ \sum_{i=1}^h j^{s+1}_i = s+1, \\
& \ j^{s+1} \text{ is $q$-close to $j^s$ for some $q \le \bar \theta$} \bigg\}.
\end{split}
\end{align}
For each solution $j^{s+1} \in \aug(j^s)$,
the corresponding objective function value is obtained from $\val(s)$ by adding
the difference,
\begin{align}
\label{eq: d}
d(j^s,j^{s+1}) : = \sum_{i\in I^+ \cup I^-} (\val(i;j^{s+1}_i) - \val(i;j^s_i)).
\end{align}
We denote by $\D(j^s)$ the family of the values $d(j^s,j^{s+1})$, for each solution $j^{s+1} \in \aug(j^s)$,
\begin{align*}
\D(j^s) := \{d(j^s,j^{s+1}) : j^{s+1} \in \aug(j^s)\}.
\end{align*}
From our discussions it follows that, in order to select the optimal solution for $\val(s+1)$, we only need to know which value in $\D(j^s)$ is the smallest.

\bigskip

We next define the set $\D$ as the union of all sets $\D(j^s)$, for any feasible solution $j^s$ of problem $\val(s)$, for any $s \in \{0, \dots, \sum_{i=1}^h n_i\}$.
Formally, the set $\D$ is defined as
\begin{align}
\begin{split}
\label{eq: qt}
\D := \bigg\{d(j^s,j^{s+1}) \ : \ & \ \text{$j^s$ is a feasible solution for $\val(s)$, for some $s \in \{0, \dots, \sum_{i=1}^h n_i \}$,} \\
& \ j^{s+1} \in \aug(j^s)\bigg\}.
\end{split}
\end{align}
The next result implies that the set $\D$ contains a number of values that is polynomial in $h$, provided that $\theta$ is fixed.
This fact could be surprising on the first glance 
since the number of feasible solutions $j^s$ for $\val(s)$ is of exponential order $\theta^h$.

\begin{lemma}
\label{lem: possibilities total}
The set $\D$ contains $O((\bar \theta+h)^{\bar \theta+1}\theta^{2\bar \theta+1})$ values.
\end{lemma}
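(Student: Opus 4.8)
The plan is to bound $|\D|$ by counting, for each possible value $d(j^s,j^{s+1})$ recorded in \eqref{eq: qt}, the combinatorial data that determines it, and then to estimate this count with the weak composition formula recalled above. The starting observation is that, by \eqref{eq: d}, the value $d(j^s,j^{s+1})$ depends only on the coordinates in which $j^s$ and $j^{s+1}$ differ: it equals $\sum_{i\in I^+\cup I^-}(\val(i;j^{s+1}_i)-\val(i;j^s_i))$, so it is completely determined by the set $I^+\cup I^-$ of changed blocks together with, for each such block $i$, the ordered pair $(j^s_i,j^{s+1}_i)$. Hence I would define a \emph{configuration} to be exactly this data and bound $|\D|$ by the number of admissible configurations.

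Next I would use the $q$-closeness restriction built into $\aug(j^s)$, namely $q\le\bar\theta$. Since every $i\in I^+$ contributes at least $1$ to $\sum_{i\in I^+}(j^{s+1}_i-j^s_i)=q+1$, the increments at the increasing blocks form a composition of $q+1$ into positive parts which, after padding with zeros over all $h$ blocks, is a weak composition of $q+1$ into $h$ parts. By the weak composition count $\binom{q+h}{h-1}$ and $q+1\le\bar\theta+1$, the number of such increase patterns is bounded by a polynomial in $h$ of degree $\bar\theta+1$, and this is precisely what yields the factor $(\bar\theta+h)^{\bar\theta+1}$. This is the step responsible for the polynomiality in $h$, despite the fact that $\val(s)$ has of the order of $\theta^h$ feasible solutions.

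It then remains to account for the starting values. Once the change pattern is fixed, the only remaining freedom at each changed block $i$ is the starting value $j^s_i\in\{0,\dots,n_i\}$, and since $n_i\le\theta$ there are at most $\theta+1$ choices per block; as the total number of changed blocks is $|I^+|+|I^-|\le(q+1)+q\le 2\bar\theta+1$, this contributes the factor $O(\theta^{2\bar\theta+1})$. Summing over the $\bar\theta+1$ admissible values of $q$ only multiplies the estimate by a factor that is constant in $h$, and multiplying the weak composition factor by the starting value factor would give the claimed $O((\bar\theta+h)^{\bar\theta+1}\theta^{2\bar\theta+1})$.

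The step I expect to be the main obstacle is organising the count so that the dependence on $h$ is carried by a \emph{single} weak composition of total size at most $\bar\theta+1$, rather than by two independent selections of increasing and decreasing blocks from among all $h$ blocks; the latter naively gives degree $2\bar\theta+1$ in $h$, since the decrease pattern is itself a weak composition of $q$ into $h$ parts. The delicate point is therefore to show that the choice of \emph{which} blocks decrease (and by how much) does not contribute an additional independent polynomial factor in $h$ beyond the starting-value factor $\theta^{2\bar\theta+1}$ — in other words, to tie the decrease data to the already-fixed increase data using the proximity structure of Lemma~\ref{lem: increase}. Controlling this interaction between $I^+$ and $I^-$, rather than bounding the magnitudes, is where the argument must be carried out with the most care.
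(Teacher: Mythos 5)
Your count is, step for step, the paper's own proof: fix $q\le\bar\theta$, encode the increase data as a weak composition of $q+1$ into $h$ parts (this simultaneously chooses $I^+$ and the increments, $O((q+h)^{q+1})$ possibilities), encode the decrease data as a weak composition of $q$ into $h$ parts ($O((q+h)^{q})$ possibilities), charge at most $\theta$ starting indices $j^s_i$ to each of the at most $2q+1$ changed blocks, and sum over $q\in\{0,\dots,\bar\theta\}$. The one step you single out as the main obstacle --- avoiding an independent factor of order $h^{q}$ for the decrease pattern --- is precisely the step the paper does not carry out either: after producing all three factors, its proof states the total for fixed $q$ as $O((q+h)^{q+1}\theta^{2q+1})$, i.e.\ the factor $O((q+h)^{q})$ for choosing $I^-$ and the decrements is silently dropped when the counts are multiplied. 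So you have not missed a trick that the paper possesses; you have located a slip in the paper's proof.

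Moreover, the coupling between $I^+$ and $I^-$ that you hope to extract from Lemma~\ref{lem: increase} does not exist, so you should not keep looking for it. In the definition \eqref{eq: qt}, $j^s$ ranges over \emph{all} feasible solutions of $\val(s)$ (not only optimal ones) and $j^{s+1}$ over all of $\aug(j^s)$; Lemma~\ref{lem: increase} only restricts which augmentations the algorithm must inspect, not which differences enter $\D$. Hence the increase data and the decrease data vary independently, and for generic values $\val(i;j)$ the numbers $d(j^s,j^{s+1})$ in \eqref{eq: d} are pairwise distinct: already for $\theta\ge 2$, $q=\bar\theta$, and all increments and decrements equal to $1$, one gets $\Theta(h^{2\bar\theta+1})$ distinct elements of $\D$, which exceeds the stated bound $O((\bar\theta+h)^{\bar\theta+1}\theta^{2\bar\theta+1})$ for fixed $\theta$ and large $h$. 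The honest conclusion of your (and the paper's) argument is $|\D| = O((\bar\theta+h)^{2\bar\theta+1}\theta^{2\bar\theta+1})$, and you should state exactly that. Nothing downstream breaks: Proposition~\ref{prop: alg}, Claim~\ref{claim: block 3} and Theorem~\ref{th: main} only use that $|\D|$, and hence $|\D_{|\lambda}|$, is polynomial in $h$ for fixed $\theta$ and $k$, so the corrected bound merely enlarges the exponents (e.g.\ $O(h^{2(2\bar\theta+1)})$ hyperplanes instead of $O(h^{2(\bar\theta+1)})$ in Claim~\ref{claim: block 3}) while preserving polynomiality.
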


\begin{proof}
We count the number of all possible values $d(j^s,j^{s+1})$ in $\D$.
Fix $q \in \{0,\dots,\bar\theta\}$, and let us consider all the values $d(j^s,j^{s+1})$ corresponding to a feasible solution $j^s$ for $\val(s)$ and a feasible solution $j^{s+1}$ for $\val(s+1)$ such that $j^{s+1}$ is $q$-close to $j^s$.
First, we construct the possible positive differences $d_i := j^{s+1}_i - j^s_i$, for $i \in I^+$.
Since the total sum of differences $d_i$, for $i \in I^+$, equals $q+1$, and the number of weak compositions of $q+1$ into $h$ parts is $\binom{q+h}{q+1}$, we conclude that there are $O((q+h)^{q+1})$ ways of choosing the set $I^+$ and constructing differences $j^{s+1}_i - j^s_i$, for $i \in I^+$.
Similarly, there are $O((q+h)^{q})$ ways of choosing the set $I^-$ and constructing differences $j^s_i - j^{s+1}_i$, for $i \in I^-$.
For each $i \in I^+ \cup I^-$, there are at most $\theta$ possible indices $j^s_i$, yielding a total number of $\theta^{|I^+ \cup I^-|} \le \theta^{2q+1}$ possible set of indices $j^s_i$, for $i \in I^+ \cup I^-$.
In total, we obtain $O((q+h)^{q+1} \theta^{2q+1})$ possible values.

From the fact that $q \in \{0,\dots,\bar\theta\}$, it follows that the cardinality of the set $\D$ is bounded by  $O((\bar \theta+h)^{\bar \theta+1}\theta^{2\bar \theta+1})$.
\end{proof}

\begin{proposition}
\label{prop: alg}
Given a total order on all the values in $\D$, we can construct an optimal solution for $\val(\es)$, for any $\es \in \{0, \dots, \sum_{i=1}^h n_i\}$, in time $O(\es(\bar \theta+h)^{\bar \theta+1}\theta^{2\bar \theta+1})$.
\end{proposition}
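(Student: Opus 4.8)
The plan is to build optimal solutions for $\val(0),\val(1),\dots,\val(\es)$ one level at a time, promoting each optimal solution to the next level by means of the proximity theorem. I would maintain at the start of iteration $s$ an optimal solution $j^s$ for $\val(s)$; the base case $s=0$ is immediate, since $j^0=(0,\dots,0)$ is the unique feasible solution for $\val(0)$. The key identity for the inductive step is that, for any $j^{s+1}\in\aug(j^s)$, the blocks outside $I^+\cup I^-$ are unchanged, so the objective value of $j^{s+1}$ equals $\sum_{i=1}^h\val(i;j^s_i)+d(j^s,j^{s+1})=\val(s)+d(j^s,j^{s+1})$, where the last equality uses that $j^s$ is optimal for $\val(s)$. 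By Lemma~\ref{lem: increase} at least one optimal solution for $\val(s+1)$ lies in $\aug(j^s)$; hence minimizing the objective over $\aug(j^s)$ yields an optimal solution for $\val(s+1)$, and this is equivalent to selecting the $j^{s+1}\in\aug(j^s)$ whose value $d(j^s,j^{s+1})$ is the smallest element of $\D(j^s)$.

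The second ingredient is to perform this selection using only the supplied total order on $\D$, so that no arithmetic on the (possibly $\lambda$-dependent) values $d(j^s,j^{s+1})$ is ever carried out. Since $\D(j^s)\subseteq\D$, the smallest element of $\D(j^s)$ is the first member of $\D$ that belongs to $\D(j^s)$ when the elements of $\D$ are scanned in increasing order with respect to the given total order. To decide whether a scanned value $d\in\D$ belongs to $\D(j^s)$ and, if so, to recover the associated $j^{s+1}$, I would attach to each value in $\D$ the transition data that produced it, namely the increments and decrements together with the base values $j^s_i$ on $I^+\cup I^-$; testing whether this data can be applied to the current $j^s$ (matching the base values and respecting the capacities $j_i\in\{0,\dots,n_i\}$) is a local check touching only the $|I^+\cup I^-|\le 2\bar\theta+1$ blocks involved, and applying the first applicable transition produces $j^{s+1}$.

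For the running time, each of the $\es$ iterations scans $\D$ once with $O(\bar\theta)$ work per element, so by Lemma~\ref{lem: possibilities total} a single iteration costs $O((\bar\theta+h)^{\bar\theta+1}\theta^{2\bar\theta+1})$ and the whole procedure costs $O(\es(\bar\theta+h)^{\bar\theta+1}\theta^{2\bar\theta+1})$, as claimed. The step I expect to be the main obstacle is precisely the bookkeeping of the previous paragraph: one must verify that scanning the global set $\D$ (rather than enumerating the individual augmentations in $\aug(j^s)$, which may be far more numerous than the distinct values they realize) genuinely returns $\min\D(j^s)$, and that the data stored with each element of $\D$ is exactly what is needed both to decide membership in $\D(j^s)$ and to reconstruct $j^{s+1}$ in time independent of $h$. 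Making this rigorous rests on the correspondence, established before Lemma~\ref{lem: possibilities total}, between the values $d(j^s,j^{s+1})$ and their transition data $(I^+,I^-,\text{increments},\text{base values})$.
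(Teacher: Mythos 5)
Your proposal is correct and takes essentially the same route as the paper's proof: an induction from $\val(0)$ up to $\val(\es)$ in which the proximity result (Lemma~\ref{lem: increase}) restricts the search to $\aug(j^s)$, the identity that the objective of $j^{s+1}\in\aug(j^s)$ equals $\val(s)+d(j^s,j^{s+1})$ reduces the step to finding $\min\D(j^s)$ under the supplied total order, and Lemma~\ref{lem: possibilities total} gives the stated running time. The only (immaterial) implementation difference is that the paper finds the minimum by a linear scan over $\D(j^s)$ using the order induced from $\D$, whereas you scan $\D$ in increasing order and test membership in $\D(j^s)$ via the stored transition data; both are sound and yield the same bound.
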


\begin{proof}
An optimal solution for $\val(0)$ is $j^0 = (0,\dots,0)$.
Let $s \in \{0, \dots, \sum_{i=1}^h n_i\}$ and assume that we have an optimal solution $j^s$ for $\val(s)$.
We show how we can construct an optimal solution $j^{s+1}$ for $\val(s+1)$.
Consider all the values in $\D(j^s)$.
Since $\D(j^s) \subseteq \D$, we can inquire a total order of $\D(j^s)$ from the given total order of $\D$.
Thus $\D(j^s)$ has a minimum element.
Since a minimum element can be found in linear time with respect to the cardinality of the set, 
we obtain a running time
$O(s(\bar \theta+h)^{\bar \theta+1}\theta^{2\bar \theta+1})$ as a consequence of Lemma~\ref{lem: possibilities total}.
In view of Lemma~\ref{lem: increase}, the solution in $\aug(j^s)$ corresponding to the minimum element in $\D(j^s)$ is an optimal solution $j^{s+1}$ for $\val(s+1)$.
This argument applied in an inductive manner leads to an optimal solution for $\val(\es)$ for any $\es \in \{0, \dots, \sum_{i=1}^h n_i\}$.
\end{proof}


\subsection{Proof of main result}
\label{sec: proof block}


We are now ready to present the formal proof of our main theorem.

\begin{proof}[Proof of Theorem~\ref{th: main}]
From Lemma~\ref{lem: reduction}, in order to prove Theorem~\ref{th: main}, we only need to show that there is a polynomial-time algorithm that solves problem~\eqref{pr: main reduced}, where $A \in \R^{m \times n}$ is block diagonal with blocks $A^i \in \R^{m_i \times n_i}$, for $i=1,\dots,h$, provided that $k,n_1,\dots,n_h$ are fixed numbers.
In the remainder of the proof we describe our algorithm to solve problem~\eqref{pr: main reduced}.

First, we consider problem $\opt(\es)_{|\lambda}$ and we rewrite it by exploiting the separability of the objective function.
For each $i=1,\dots,h$, let $x^i \in \R^{n_i}$, $b^i \in \R^{m_i}$, and $c_\ell^i \in \R^{m_i} \ \forall \ell=1,\dots,k$ such that 
\begin{align*}
x=
\begin{pmatrix}
x^1 \\
\vdots \\
x^h
\end{pmatrix},
\qquad
b=
\begin{pmatrix}
b^1 \\
\vdots \\
b^h
\end{pmatrix},
\qquad
c_\ell=
\begin{pmatrix}
c_\ell^1 \\
\vdots \\
c_\ell^h
\end{pmatrix}.
\end{align*}
Define the subproblem of $\opt(\es)_{|\lambda}$ on block $A^i$, with $i \in \{1,\dots,h\}$, and support $j$, for $j \in \{0,\dots,n_i\}$.
Formally, 
\begin{align*}
\opt(i;j)_{|\lambda} & := \min \left\{\norm{A^i x^i - \pare{b^i - \sum_{\ell = 1}^k c_\ell^i \lambda_\ell}}^2 \ : \ x^i \in \R^{n_i}, \ |\supp(x^i)| \le j \right\}.
\end{align*}
We can finally rewrite $\opt(\es)_{|\lambda}$ in the form
\begin{align}
\label{eq: sepa2}
\opt(\es)_{|\lambda} = \min \left\{ \sum_{i=1}^h \opt(i;j_i)_{|\lambda} \ : \ \sum_{i=1}^h j_i = \es, \ j_i \in \{0,\dots,n_i\} \right\}.
\end{align}
Note that in this new form, the decision variables are the integers $j_i$, for $i = 1,\dots,h$, and the variables $x$ do not appear explicitly.
We observe that we have reduced ourselves to the same setting described in Section~\ref{sec: block simple}.
In fact, for each fixed $\lambda$, each $\opt(i;j_i)_{|\lambda}$ can be calculated by solving a fixed number of linear least squares problems.
Thus, problem $\opt(\es)_{|\lambda}$ is now a problem of the form $\val(\es)$, as defined in \eqref{eq: sepa} and can be solved efficiently as a consequence of Proposition~\ref{prop: alg}.
Hence, problem $\opt(\es)_{|\lambda}$ can be solved for each fixed $\lambda$.
However, in order to solve our original problem, we have to solve $\opt(\es)_{|\lambda}$ for every $\lambda \in \R^k$.
In order to do this, we now think of $\opt(\es)_{|\lambda}$ and $\opt(i;j)_{|\lambda}$ as functions that associate to each $\lambda \in \R^k$ a real number.

Next, we define a space $\S$ that is an extended version of the space $\R^k$ of variables $\lambda_\ell$, for $\ell = 1,\dots,k$.
The space $\S$ contains all the variables $\lambda_\ell$, for $\ell = 1,\dots,k$, and it also contains one variable for each product of two variables $\lambda_{\ell_1} \lambda_{\ell_2}$, with $\ell_1,\ell_2 \in \{1,\dots,k\}$.
The dimension of the space $\S$ is therefore $(k^2+k)/2 \le k^2$.
Note that, for each $\lambda \in \R^k$, there exists a unique corresponding point in $\S$, that we denote by $\ext(\lambda)$, obtained by computing all the products $\lambda_{\ell_1} \lambda_{\ell_2}$, for $\ell_1,\ell_2 \in \{1,\dots,k\}$.

\begin{claim}
\label{claim: block 2}
We can construct in polynomial time an index set $T$ with $|T| = O(h^{k^2})$,
polyhedra $P^t \subseteq \S$, for $t \in T$, that cover $\S$, 
and sets $\upsilon^t(i;j)\subseteq \{1,\dots,n_i\}$ of cardinality $j$, for each $i \in \{1,\dots,h\}$, $j \in \{0,\dots,n_i\}$, and $t \in T$,
with the following property:
For every $i \in \{1,\dots,h\}$, $j \in \{0,\dots,n_i\}$, $t \in T$, and for every $\lambda$ such that $\ext(\lambda) \in P^t$,
the problem $\opt(i;j)_{|\lambda}$ has an optimal solution with support contained in $\upsilon^t(i;j)$.
\end{claim}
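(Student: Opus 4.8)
The plan is to mirror the strategy of Claim~\ref{claim: diag 2}, replacing the per-variable comparison of Lemma~\ref{lem: diag simple} by a per-block least-squares analysis. First I would solve each subproblem $\opt(i;j)_{|\lambda}$ by brute force over supports: since $n_i \le \theta$ is constant, there are at most $2^{n_i} \le 2^\theta$ candidate supports $S \subseteq \{1,\dots,n_i\}$, and for a fixed support $S$ the optimal value is the squared residual of projecting $b^i - \sum_\ell c^i_\ell \lambda_\ell$ onto the column space of the submatrix $A^i_S$ of $A^i$ with columns indexed by $S$. Writing $\Pi^i_S$ for the orthogonal projection onto that column space, this residual equals
\begin{align*}
f^i_S(\lambda) := \norm{(I - \Pi^i_S)\pare{b^i - \sum_{\ell=1}^k c^i_\ell \lambda_\ell}}^2,
\end{align*}
and since enlarging a support can never increase the residual, $\opt(i;j)_{|\lambda} = \min\{ f^i_S(\lambda) : |S| = j\}$.

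The key observation is that $b^i - \sum_\ell c^i_\ell \lambda_\ell$ is affine in $\lambda$, so each $f^i_S(\lambda)$ is a quadratic polynomial in $\lambda$; consequently it is an affine function of the lifted coordinates $\ext(\lambda) \in \S$. Thus comparing two residuals $f^i_S(\lambda) \le f^i_{S'}(\lambda)$ amounts to a single linear inequality on $\S$, whose boundary is a hyperplane in $\S$. I would form, for every block $i$ and every pair of supports $S, S'$ within that block, the hyperplane $\{f^i_S = f^i_{S'}\}$ in $\S$. Since each block contributes at most $\binom{2^\theta}{2} = O(1)$ such hyperplanes, in total there are $O(h)$ hyperplanes in $\S$.

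Applying the hyperplane arrangement theorem in $\S$, whose dimension is $(k^2+k)/2 \le k^2$, these $O(h)$ hyperplanes induce a cell complex consisting of $O(h^{k^2})$ polyhedra $P^t$, $t \in T$, that cover $\S$ and can be computed in $O(h^{k^2})$ operations; as $k$ is fixed this is polynomial in $h$. Within a single cell $P^t$ the side of every comparison hyperplane is fixed, so the relative order of the values $\{f^i_S(\lambda)\}$ is the same for all $\lambda$ with $\ext(\lambda) \in P^t$. Hence for each $i$ and $j$ I can select a minimizer $\upsilon^t(i;j)$ of $\{f^i_S(\lambda) : |S| = j\}$ that is valid simultaneously for all such $\lambda$; by the reduction of the first paragraph, the least-squares solution supported on $\upsilon^t(i;j)$ is optimal for $\opt(i;j)_{|\lambda}$, which is the claim.

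The main obstacle to anticipate is the linearization step: the residual comparisons are genuinely quadratic in $\lambda$, so a naive arrangement of hyperplanes in $\R^k$ does not apply. The device that makes everything work is lifting to the degree-$\le 2$ monomial space $\S$, under which every quadratic residual becomes affine; this is precisely why the paper introduces $\S$ and $\ext$. A secondary point requiring care is that the optimal support of $\opt(i;j)_{|\lambda}$ may have cardinality strictly less than $j$, but since padding any support up to cardinality $j$ never increases the residual, one may always take $\upsilon^t(i;j)$ of cardinality exactly $j$, as required by the statement.
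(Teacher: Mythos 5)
Your proposal is correct and follows essentially the same route as the paper's own proof: express the per-support residual of each block as a quadratic in $\lambda$ (hence affine in $\ext(\lambda)$), compare supports pairwise to get $O(1)$ hyperplanes per block and $O(h)$ in total, and invoke the hyperplane arrangement theorem in $\S$ to get $O(h^{k^2})$ cells on each of which a uniform minimizing support $\upsilon^t(i;j)$ can be read off. The only cosmetic differences are that you work directly with the projection matrix $\Pi^i_S$ where the paper argues linearity of the projection via Gram--Schmidt, and you compare all pairs of supports rather than only pairs of equal cardinality $j$, both of which are harmless.
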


\begin{cpf}
Let $i \in \{1,\dots,h\}$.
First, we show that for every index set $\upsilon \subseteq \{1,\dots,n_i\}$, the best solution for problem $\opt(i;j)_{|\lambda}$ with support $\upsilon$ has an objective value that is a quadratic function in $\lambda$.
To see this, let $L$ be the linear subspace of $\R^{m_i}$ defined by $L := \{A^i x^i : \supp(x^i) \subseteq \upsilon\}$.
Consider the affine linear function $p : \R^k \to \R^{m_i}$ defined by $p^i(\lambda) := b^i - \sum_{\ell =1}^k c_\ell^i \lambda_\ell$.
Then the best solution for problem $\opt(i;j)_{|\lambda}$ with support $\upsilon$ has objective value
\begin{align}
\label{eq: best for one support}
\dist^2\left(L, p^i(\lambda)\right) 
= \norm{\proj_L (p^i(\lambda)) - p^i(\lambda)}^2.
\end{align}
The projection $\proj_L (p^i(\lambda))$ can be written as a linear function in $\lambda$.
In order to see this, as a consequence of the Gram-Schmidt orthogonalization, we can assume that the columns of $A^i$ are pairwise orthogonal. The projection of $p^i(\lambda)$ onto $L$ is simply the sum of scalar products of $p^i(\lambda)$ with the columns of the matrix $A^i$, which is a linear function.
Therefore the expression on the right hand side of \eqref{eq: best for one support} is a quadratic function in $\lambda$.

Let $j \in \{0,\dots,n_i\}$, and let $\upsilon^1, \upsilon^2$ be two different index sets contained in $\{1,\dots,n_i\}$ of cardinality $j$.
We wish to obtain a hyperplane that subdivides all points $\ext(\lambda) \in \S$ based on which of the two supports $\upsilon^1$ and $\upsilon^2$ yields a better solution for the problem $\opt(i;j)_{|\lambda}$.
To this end consider the equation whose left-hand side and right-hand side are two expressions of the type \eqref{eq: best for one support} corresponding to the two index sets $\upsilon^1$ and $\upsilon^2$, namely
\begin{align*}
\norm{\proj_{L^1} (p^i(\lambda)) - p^i(\lambda)}^2 
= \norm{\proj_{L^2} (p^i(\lambda)) - p^i(\lambda)}^2,
\end{align*}
where $L^\beta := \{A^i x^i : \supp(x^i) \subseteq \upsilon^\beta\}$ for $\beta \in \{1,2\}$.
Our argument implies that this is a quadratic equation.
Thus, by linearizing all the quadratic terms, we obtain a hyperplane in the space $\S$.
As desired, this hyperplane subdivides all points $\ext(\lambda) \in \S$ based on which of the two supports yields a better solution for the problem $\opt(i;j)_{|\lambda}$.

By considering the hyperplanes of this form corresponding to all possible distinct pairs of index sets in $\{1,\dots,n_i\}$ of cardinality $j$, we obtain fewer than $\binom{n_i}{j}^2$ hyperplanes in $\S$.
By considering these hyperplanes for all possible $j \in \{0,\dots,n_i\}$, we obtain at most $2^{2n_i}$ hyperplanes in $\S$, which is a fixed number since $n_i$ is fixed.
Then, by considering all $i \in \{1,\dots,h\}$, we obtain at most $\sum_{i=1}^h 2^{2n_i} \le h \max\{2^{2n_i} : i=1,\dots,h\} = O(h)$ hyperplanes in $\S$. 
These hyperplanes subdivide $\S$ into a number of full-dimensional polyhedra.
By the hyperplane arrangement theorem \cite{EdeOroSei86}, this subdivision consists of at most $O(h^{\dim(\S)}) = O(h^{k^2})$ polyhedra $P^t$, for $t \in T$.
Since $k$ is fixed, $|T|$ is polynomial in $h$ and the subdivision can be obtained in polynomial time. 

Let $i \in \{1,\dots,h\}$, $j \in \{0,\dots,n_i\}$, and $t \in T$.
By checking, for each hyperplane corresponding to two index sets contained in $\{1,\dots,n_i\}$ of cardinality $j$, in which of the two half-spaces lies $P^t$, we obtain a total order on all the expressions of the type \eqref{eq: best for one support} corresponding to index sets contained in $\{1,\dots,n_i\}$ of cardinality $j$.
The obtained total order is global, 
i.e.,
for each fixed $\lambda$ with $\ext(\lambda) \in P^t$, it induces a consistent total order on the values obtained by fixing $\lambda$ in the expressions \eqref{eq: best for one support}.
Consider a minimum element of the obtained total order, and the corresponding index set $\upsilon^t(i;j)\subseteq \{1,\dots,n_i\}$ of cardinality $j$.
This index set has then the property that problem $\opt(i;j)_{|\lambda}$ has an optimal solution with support contained in $\upsilon^t(i;j)$ for all $\lambda$ such that $\ext(\lambda) \in P^t$.
\end{cpf}

As in Section~\ref{sec: block simple}, let $\theta := \max\{n_i : i=1,\dots,h\}$, and $\bar \theta := (\theta - 1) \theta (\theta + 1)/2$.

\begin{claim}
\label{claim: block 3}
Let $t \in T$.
We can construct in polynomial time an index set $U^t$ with $|U^t| = O(h^{2 k^2 (\bar \theta +1)})$,
polyhedra $Q^{t,u} \subseteq P^t$, for $u \in U^t$, that cover $P^t$, 
and index sets $\chi^{t,u} \subseteq \{1,\dots,n \}$ of cardinality $\es$, for each $u \in U^t$, 
with the following property:
For every $t \in T$, $u \in U^t$, and for every $\lambda$ such that $\ext(\lambda) \in Q^{t,u}$,
the problem $\opt(\es)_{|\lambda}$ has an optimal solution with support contained in $\chi^{t,u}$.
\end{claim}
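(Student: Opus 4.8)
The plan is to fix a polyhedron $P^t$ from Claim~\ref{claim: block 2} and further subdivide it using the proximity machinery developed in Section~\ref{sec: block simple}. The starting observation is that, by Claim~\ref{claim: block 2}, whenever $\ext(\lambda) \in P^t$ the value $\opt(i;j)_{|\lambda}$ is attained by the single fixed support $\upsilon^t(i;j)$, and hence coincides with one expression of the form \eqref{eq: best for one support}, which we already established to be a quadratic function of $\lambda$. Consequently, over $P^t$ each quantity $\val(i;j) := \opt(i;j)_{|\lambda}$ is a fixed quadratic in $\lambda$, and problem $\opt(\es)_{|\lambda}$ becomes a problem of the form $\val(\es)$ whose input values $\val(i;j)$ are these quadratics.

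First I would note that every value $d(j^s,j^{s+1}) \in \D$ (see \eqref{eq: d} and \eqref{eq: qt}) is a difference of sums of the quadratics $\val(i;\cdot)$, and is therefore itself a quadratic function of $\lambda$ on $P^t$. By Lemma~\ref{lem: possibilities total}, and since $\theta$ (hence $\bar\theta$) is fixed, there are $O(h^{\bar\theta+1})$ such values. Next, for each unordered pair of values in $\D$ I would write the equation equating the two corresponding quadratics in $\lambda$; linearizing the degree-two monomials through the $\ext$ map, exactly as in the proof of Claim~\ref{claim: block 2}, turns each such equation into a hyperplane in $\S$. This produces $O(|\D|^2) = O(h^{2(\bar\theta+1)})$ hyperplanes in $\S$.

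I would then invoke the hyperplane arrangement theorem \cite{EdeOroSei86}: since $\dim(\S) \le k^2$, these hyperplanes subdivide $\S$ into $O((h^{2(\bar\theta+1)})^{k^2}) = O(h^{2k^2(\bar\theta+1)})$ full-dimensional polyhedra, and I would take the $Q^{t,u}$ to be the intersections of these cells with $P^t$, indexed by $u \in U^t$ with $|U^t| = O(h^{2k^2(\bar\theta+1)})$. The decisive point is that on the interior of each cell $Q^{t,u}$ none of the defining hyperplanes is crossed, so the relative order of all values in $\D$ is constant; thus each $Q^{t,u}$ fixes a single total order on $\D$ (ties on lower-dimensional faces can be broken by any fixed rule, and the constructed solution remains optimal there because equal values give equal objective contributions).

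Finally, I would apply Proposition~\ref{prop: alg}: since its inductive construction of an optimal solution for $\val(\es)$ uses only comparisons between elements of $\D$, a fixed total order on $\D$ produces one and the same optimal solution $j^\es = (j^\es_1,\dots,j^\es_h)$ for $\val(\es)$ for every $\lambda$ with $\ext(\lambda) \in Q^{t,u}$. Combining this with Claim~\ref{claim: block 2}, the index set $\chi^{t,u} := \bigcup_{i=1}^h \upsilon^t(i;j^\es_i)$ is a support of cardinality $\sum_{i=1}^h j^\es_i = \es$ at which $\opt(\es)_{|\lambda}$ attains an optimal solution, as required. The main obstacle I anticipate is the bookkeeping needed to certify that every $\D$-value is genuinely a single quadratic in $\lambda$ across all of $P^t$, rather than a pointwise minimum of several competing quadratics; this is exactly where Claim~\ref{claim: block 2} is essential, since it freezes the optimal support of each block subproblem $\opt(i;j)_{|\lambda}$ over $P^t$, collapsing each $\val(i;j)$ to a single quadratic and making the linearization into hyperplanes legitimate.
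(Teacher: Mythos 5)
Your proposal is correct and follows essentially the same route as the paper's proof: freeze each block value $\opt(i;j)_{|\lambda}$ to a single quadratic over $P^t$ via Claim~\ref{claim: block 2}, linearize pairwise comparisons of the $O(h^{\bar\theta+1})$ values in $\D$ into $O(h^{2(\bar\theta+1)})$ hyperplanes in $\S$, apply the hyperplane arrangement theorem to get the cells $Q^{t,u}$, and run Proposition~\ref{prop: alg} under the fixed total order to extract a $\lambda$-independent optimal support. Your explicit construction $\chi^{t,u} = \bigcup_{i=1}^h \upsilon^t(i;j^\es_i)$ and your remark on tie-breaking at cell boundaries merely spell out details the paper leaves implicit.
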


\begin{cpf}
To prove this claim, we first construct all polyhedra $Q^{t,u}$, for $u \in U^t$. 
Then we show how to construct the index sets with the desired property.

Let $s \in \{0,\dots,n\}$.
Let $j^s = (j^s_1,\dots,j^s_h)$ such that $j^s_i \in \{0,\dots,n_i\}$ for every $i \in \{1,\dots,h\}$, and $\sum_{i=1}^h j^s_i = s$.
Define $\aug(j^s)$ as in \eqref{eq: aug} and 
let $j^{s+1} \in \aug(j^s)$.
Define the sets $I^- := \{i \in \{1,\dots,h\} : j^{s+1}_i < j^s_i \}$ and $I^+ := \{i \in\{1,\dots,h\} : j^{s+1}_i > j^s_i \}$.
For $\lambda$ such that $\ext(\lambda) \in P^t$, we define the expression
\begin{align*}
d(j^s,j^{s+1})_{|\lambda} : = \sum_{i\in I^+ \cup I^-} (\opt(i;j^{s+1}_i)_{|\lambda} - \opt(i;j^s_i)_{|\lambda}).
\end{align*}
For $\lambda$ such that $\ext(\lambda) \in P^t$, consider the set $\D_{|\lambda}$ defined by
\begin{align*}
{\textstyle \D_{|\lambda}} := \bigg\{d(j^s,j^{s+1})_{|\lambda} \ : \ & \ \text{there exists $s \in \{0, \dots, \sum_{i=1}^h n_i\}$ such that } \\
& \ j^s \text{ is a feasible solution for } \opt(s)_{|\lambda} \text{ and } j^{s+1} \in \aug(j^s)\bigg\}.
\end{align*}
Note that for each fixed $\lambda$, each $d(j^s,j^{s+1})_{|\lambda}$ is a value of the form $d(j^s,j^{s+1})$, as defined in \eqref{eq: d}, and the set $\D_{|\lambda}$ reduces to the set $\D$, as defined in \eqref{eq: qt}.

Let $d(j^s,j^{s+1})_{|\lambda}$ and $d(k^s,k^{s+1})_{|\lambda}$ be two distinct expressions in $\D_{|\lambda}$.
We wish to subdivide all points $\ext(\lambda) \in \S$  based on which of the two expressions is larger.
In order to do so, consider the equation
\begin{align}
\label{eq: quadratic?}
d(j^s,j^{s+1})_{|\lambda} = d(k^s,k^{s+1})_{|\lambda}.
\end{align}
We show that \eqref{eq: quadratic?} is a quadratic equation in $\lambda$.
Consider a single $\opt(i;j)_{|\lambda}$ that appears in the expression defining $d(j^s,j^{s+1})_{|\lambda}$, and let $\upsilon^t(i;j)$ be the corresponding index set from Claim \ref{claim: block 2}.
Let $L$ be the linear subspace of $\R^{m_i}$ defined by $L := \{A^i x^i : \supp(x^i) \subseteq \upsilon^t(i;j)\}$, and let $p^i(\lambda) := b^i - \sum_{\ell =1}^k c_\ell^i \lambda_\ell$.
From Claim~\ref{claim: block 2}, 
for all $\lambda$ such that $\ext(\lambda) \in P^t$, 
we have that $\opt(i;j)_{|\lambda}$ can be written as the quadratic function in $\lambda$ of the form \eqref{eq: best for one support}, namely
\begin{align*}
\opt(i;j)_{|\lambda} = \norm{\proj_L (p^i(\lambda)) - p^i(\lambda)}^2.
\end{align*}
The expression $d(j^s,j^{s+1})_{|\lambda}$
is a linear combination of  expressions $\opt(i;j)_{|\lambda}$. Hence, it can also be written as a quadratic function in $\lambda$.
The same argument shows that also the expression $d(k^s,k^{s+1})_{|\lambda}$ can be written as a quadratic function in $\lambda$.
Hence \eqref{eq: quadratic?} is a quadratic equation in $\lambda$.
By linearizing all the quadratic terms, we obtain a hyperplane in the space $\S$.

As a consequence of Lemma~\ref{lem: possibilities total}, the set $\D_{|\lambda}$ contains $O(h^{\bar \theta +1})$ expressions. Thus, by considering the corresponding hyperplane for all possible distinct pairs of expressions in $\D_{|\lambda}$, we obtain a total number of $O(h^{2(\bar \theta +1)})$ hyperplanes.
These hyperplanes subdivide $\S$ into a number of full-dimensional polyhedra.
The hyperplane arrangement theorem \cite{EdeOroSei86} implies that this subdivision consists of at most $O((h^{2(\bar \theta +1)})^{\dim(\S)}) 
= O(h^{2 k^2 (\bar \theta +1)})$ polyhedra $R^u$, for $u \in U^t$.
Since $k$ and $\bar \theta$ are fixed, $|U^t|$ is polynomial in $h$ and the subdivision can be obtained in polynomial time.
Define $Q^{t,u} := P^t \cap R^u$, for every $u \in U^t$.

We now fix one polyhedron $Q^{t,u}$, for some $u \in U^t$.
By checking, for each hyperplane that we have constructed above, in which of the two half-spaces lies $Q^{t,u}$, we obtain a total order on all the expressions in $\D_{|\lambda}$.
The obtained total order
induces a consistent total order on the values obtained by fixing $\lambda$ in the expressions in $\D_{|\lambda}$, for each fixed $\lambda$ with $\ext(\lambda) \in Q^{t,u}$.
Since problem $\opt(\es)_{|\lambda}$ can be written in the form \eqref{eq: sepa2},  Proposition~\ref{prop: alg} implies that we can obtain an optimal support $\chi^{t,u} \subseteq \{1,\dots,\sum_{i=1}^h n_i \}$ for problem $\opt(\es)_{|\lambda}$, for each fixed $\lambda$ with $\ext(\lambda) \in Q^{t,u}$.
Note that, since the total order is independent on $\lambda$, also the obtained support is independent on $\lambda$.
Therefore the claim follows.
\end{cpf}

Let $\X$ be the set containing all index sets $\chi^{t,u}$ obtained in Claim~\ref{claim: block 3}, namely
\begin{align*}
\X := \{ \chi^{t,u} \ : \ t \in T, \ u \in U^t\}.
\end{align*}

\begin{claim}
\label{claim: block 4}
There exists an optimal solution $(x^*,\lambda^*)$ of problem~\eqref{pr: main reduced} such that 
\begin{align*}
\supp(x^*) \subseteq \chi \text{ for some } \chi \in \X.
\end{align*}
\end{claim}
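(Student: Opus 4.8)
The plan is to mirror the argument used for Claim~\ref{claim: diag 3} in the diagonal case, now lifting everything through the map $\ext$ into the extended space $\S$. First I would fix an optimal solution $(x^*,\lambda^*)$ of problem~\eqref{pr: main reduced} and observe that, because the support constraint only involves $x$ while $\lambda^*$ is held fixed, the vector $x^*$ must itself be an optimal solution of the restricted problem $\opt(\es)_{|\lambda^*}$. Indeed, if some feasible $x$ gave a strictly smaller objective for $\opt(\es)_{|\lambda^*}$, then $(x,\lambda^*)$ would improve on $(x^*,\lambda^*)$ in problem~\eqref{pr: main reduced}, contradicting optimality.

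Next I would locate $\lambda^*$ within the cell decomposition. Passing to $\ext(\lambda^*)\in\S$, I would invoke the covering property: by Claim~\ref{claim: block 2} the polyhedra $P^t$ cover $\S$, and by Claim~\ref{claim: block 3}, for each $t\in T$ the polyhedra $Q^{t,u}$ cover $P^t$; hence the whole family $\{Q^{t,u}\}$ covers $\S$, and some pair $(t,u)$ with $t\in T$, $u\in U^t$ satisfies $\ext(\lambda^*)\in Q^{t,u}$. For this pair, Claim~\ref{claim: block 3} guarantees that $\opt(\es)_{|\lambda^*}$ admits an optimal solution $\tilde x$ whose support is contained in $\chi^{t,u}$, and by construction $\chi^{t,u}\in\X$.

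Finally I would assemble the solution. Since both $\tilde x$ and $x^*$ are optimal for $\opt(\es)_{|\lambda^*}$, they attain the same objective value; therefore replacing $x^*$ by $\tilde x$ yields a point $(\tilde x,\lambda^*)$ that is feasible and optimal for problem~\eqref{pr: main reduced}, and that now satisfies $\supp(\tilde x)\subseteq\chi^{t,u}$ with $\chi^{t,u}\in\X$. This is exactly the claimed statement.

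The argument is short and structurally identical to the diagonal case; the only point requiring care is the covering bookkeeping, namely verifying that the families $\{P^t\}$ and $\{Q^{t,u}\}$ genuinely exhaust $\S$ so that $\ext(\lambda^*)$ always lands in at least one cell. This is immediate from the constructions in Claims~\ref{claim: block 2} and~\ref{claim: block 3}, and so I expect no real obstacle here: all the genuine difficulty has already been absorbed into those two claims (the quadratic-to-hyperplane linearization in $\S$ together with the proximity and augmentation machinery underlying Proposition~\ref{prop: alg}).
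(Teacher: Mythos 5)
Your proposal is correct and follows essentially the same argument as the paper's own proof: fix an optimal $(x^*,\lambda^*)$, note that $x^*$ is optimal for $\opt(\es)_{|\lambda^*}$, locate $\ext(\lambda^*)$ in some cell $Q^{t,u}$ of the covering, and use Claim~\ref{claim: block 3} to swap $x^*$ for $\tilde x$ with $\supp(\tilde x) \subseteq \chi^{t,u} \in \X$. Your added remarks on the covering bookkeeping and on the exchange of equally optimal solutions make explicit what the paper leaves implicit, but the route is identical.
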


\begin{cpf}
Let $(x^*,\lambda^*)$ be an optimal solution of problem~\eqref{pr: main reduced}. 
Then $x^*$ is an optimal solution of the restricted problem $\opt(\es)_{|\lambda^*}$.
Let $Q^{t,u}$, for $t \in T$, $u \in U^t$, be a polyhedron such that $\ext(\lambda^*) \in Q^{t,u}$, and let $\chi^{t,u} \in \X$ be the corresponding index set.
From Claim~\ref{claim: block 3}, the problem $\opt(\es)_{|\lambda^*}$ has an optimal solution $\tilde x$ with support contained in $\chi^{t,u}$.
This implies that the solution $(\tilde x,\lambda^*)$ is also optimal for problem~\eqref{pr: main reduced}.
\end{cpf}

For each $\chi \in \X$, each problem~\eqref{pr: main reduced}, with the additional constraints $x_i = 0$, for all $i \notin \chi$, is a linear least squares problem, since the cardinality constraint can be dropped, and it can then be solved in polynomial time.
The best solution among the obtained ones is an optimal solution of \eqref{pr: main reduced}.
To find an optimal solution to \eqref{pr: main reduced}, we have solved $O(h^{k^2}(h^{2(\bar \theta +1)})^{k^2}) = O(h^{k^2(2 \bar \theta +3)})$ linear least squares problems.

Since in the proof of Lemma~\ref{lem: reduction} we have $k' \le k+1$, to find an optimal solution to~\eqref{pr: main} we need to solve $O(2^k h^{(k+1)^2(2 \bar \theta +3)})$ linear least squares problems.
This concludes the proof of Theorem~\ref{th: main}.
\end{proof}

\ifthenelse {\boolean{SIOPT}}
{
\bibliographystyle{siamplain}
}
{
\bibliographystyle{plain}
}

\bibliography{biblio}

\end{document}